  \newtheorem{thm}{Theorem}[section]
  \newtheorem{lem}[thm]{Lemma}
  \newtheorem{que}[thm]{Question}
  \newtheorem{cor}[thm]{Corollary}
  \newtheorem*{thmA}{Theorem A}
  \newtheorem*{thmB}{Theorem B}
  \newtheorem*{thmC}{Theorem  C}
    \newtheorem*{thmE}{Theorem E}
  \newtheorem*{thmD}{Theorem  D}
    \newtheorem*{thmF}{Theorem  F}
  \newtheorem*{corG}{Corollary G}
  \newtheorem*{thm1.1}{Theorem}
  \newtheorem*{thm1.2}{Theorem}
\newcommand{\Aut}{{\operatorname{Aut}}}
\newcommand{\rdim}{{\operatorname{rdim}}}
\newcommand{\GL}{{\operatorname{GL}}}
\newcommand{\SL}{{\operatorname{SL}}}
\newcommand{\Soc}{{\operatorname{Soc}}}
\newcommand{\core}{{\operatorname{core}}}
\newcommand{\Irr}{{\operatorname{Irr}}}
\newcommand{\Ker}{{\operatorname{Ker}}}
\newcommand{\GF}{{\operatorname{GF}}}
\newcommand{\ed}{{\operatorname{ed}}}
  \title[Minimal dimension of a faithful representation]{On the minimal dimension of a faithful linear representation of a finite group}
  \author{}
  \date{}
\begin{document}

  \maketitle

 \bigskip
  \centerline{by}
  \bigskip

 \smallskip
  \centerline{Alexander Moret\'o}  \centerline{Departament
  de Matem\`atiques} \centerline{Universitat de Val\`encia}
  \centerline{46100 Burjassot. Val\`encia SPAIN} \centerline{  Alexander.Moreto@uv.es}

 \vskip 10pt

{\bf Abstract.}  The representation dimension of a finite group $G$ is  the minimal dimension of a faithful complex linear representation of  $G$. We prove that the representation dimension of any finite group $G$  is at most $\sqrt{|G|}$ except if $G$ is a $2$-group with elementary abelian center of order $8$ and all irreducible characters of $G$ whose kernel does not contain $Z(G)$ are fully ramified with respect to $G/Z(G)$. We also obtain bounds for the representation dimension of quotients of $G$ in terms of the representation dimension of $G$, and discuss the relation of this invariant with the essential dimension of $G$. 

{\bf AMS Subject Classification.} Primary  20C15, Secondary 14E07, 12F10

{\bf Keywords and phrases.}  representation dimension, essencial dimension, faithful representation
  \vfill

  \noindent   Research  supported by Ministerio de Ciencia e Innovaci\'on (Grant PID2019-103854GB-I00 funded by MCIN/AEI/ 10.13039/501100011033 )   and Generalitat Valenciana AICO/2020/298. I thank D. Holt, Z. Reichstein and G. Robinson for helpful comments.  In particular, Z. Reichstein asked the question that led to Theorem D. I am also indebted to D. Rae for pointing out a mistake in a previous version of this paper. 
  
 \section{Introduction}
 
 Given a positive integer $n$, the study of the (finite complex) linear groups of degree $n$ has been a classical theme of research in finite group theory.  For instance, in 1878, C. Jordan proved that if $G$ is a linear group of degree $n$, then there exists $A\trianglelefteq G$ abelian such that $|G:A|\leq j(n)$ for some integer valued function $j(n)$ (see \cite{rob} for a modern classification-free proof of this theorem and for a description of earlier proofs.) After the classification of finite simple groups was completed, sharp bounds for the function $j(n)$ were found by M. Collins \cite{col} in 2008, improving on an earlier unpublished manuscript by B. Weisfeiler. 
  
 Following \cite{ckr},  let $\rdim(G)$ be the minimal integer such that a finite group $G$ embeds into $\GL(\rdim(G),\mathbb{C})$, i.e.,   $\rdim(G)$ is the smallest integer $n$ such that a finite group $G$ is a linear group of degree $n$. This was called the representation dimension in \cite{ckr}. Clearly, $\rdim(G)\leq|G|$. Surprisingly, this natural invariant of a finite group has not been very studied from a group-theoretic point of view. Recently, it has been proven to be very relevant in a large number of areas outside finite group representation theory. See for instance the Preface of \cite{tot} for its relevance in group cohomology theory or \cite{bg, gow} for its relevance to show that certain Cayley graphs are expander graphs. All the nontrivial results we are aware of  on $\rdim(G)$ when $G$ is not close to a simple group have been motivated  by the so-called essential dimension of a finite group $\ed(G)$.  
  This concept was introduced in 1997  by J. Buhler and Z. Reichstein in \cite{br} with motivations from algebraic geometry. Since then it has found applications in a large number of areas (see \cite{mer1, mer2}).  It is known that $\ed(G)\leq \rdim(G)$ (see Proposition 4.15 of \cite{bf}). Both $\ed(G)$ and $\rdim(G)$ depend on the field  over which we are considering the representations of $G$ and are of interest over arbitrary fields. For simplicity, in this note we will restrict ourselves to the field of complex numbers, although our arguments work over any field with sufficiently many roots of unity. A major result was the proof by N. Karpenko and A. Merkurjev \cite{km} that $\ed(G)=\rdim(G)$ when $G$ is a $p$-group. This has motivated the study of $\rdim(G)$ for several families of $p$-groups. See \cite{ mr, ckr, bms1, bms2}.
  
  In this paper, prompted by a question raised on the Math Overflow web site, we study the problem of finding sharp bounds for $\rdim(G)$ in terms of $|G|$. More precisely, the question asked was  whether $\rdim(G)\leq\sqrt{|G|}$. As pointed out by D. Holt, $C_2\times C_2\times C_2$ is a counterexample.  Our first main result shows that all counterexamples are closely related to Holt's example. In the following statement, $\Soc(G)$ is the socle of $G$ and $\Irr(G|Z(G))$ is the set of irreducible characters of $G$ that lie over a nonprincipal linear character of the center of $G$. We refer the reader to Problem 6.3 of \cite{isa} for the definition of fully ramified character.

  \begin{thmA}
  Let $G$ be a finite  group. Then one of the following holds:
  \begin{enumerate}
  \item
  $\rdim(G)\leq\sqrt{|G|}$; 
  \item
   $G$ is a $2$-group with socle $\Soc(G)=Z(G)=C_2\times C_2\times C_2$ and all characters in $\Irr(G|Z(G))$ are fully ramified with respect to $G/Z(G)$. For any such group $G$, 
   $\rdim(G)=\frac{3}{\sqrt{8}}\sqrt{|G|}$. 
   \end{enumerate}
  \end{thmA}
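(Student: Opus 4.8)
The plan is to use that a direct sum $\bigoplus_i\chi_i$ of irreducible characters of $G$ is faithful exactly when $\bigcap_i\ker\chi_i$ contains no minimal normal subgroup of $G$. If $G$ has a faithful irreducible character $\chi$ then $\chi(1)^2<\sum_{\psi\in\Irr(G)}\psi(1)^2=|G|$, so $\rdim(G)\le\chi(1)<\sqrt{|G|}$ and (1) holds; hence we may assume $G$ has no faithful irreducible representation, so $Z:=Z(G)$ is noncyclic. Throughout I use the identity $\sum_{\chi\in\Irr(G\mid\lambda)}\chi(1)^2=|G:Z|$ for each $\lambda\in\Irr(Z)$, which follows from $\mathrm{Ind}_Z^G\lambda=\sum_{\chi\in\Irr(G\mid\lambda)}\chi(1)\chi$; consequently $m_\lambda:=\min\{\chi(1):\chi\in\Irr(G\mid\lambda)\}$ satisfies $m_\lambda\le\sqrt{|G:Z|}$, with equality precisely when $\Irr(G\mid\lambda)=\{\chi\}$ and $\chi$ is fully ramified over $G/Z$.

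Suppose first that $\Soc(G)\le Z$. Pick $\lambda_1,\dots,\lambda_d$ generating $\Irr(Z)$, where $d=d(Z)$ is the minimal number of generators of $Z$, and pick $\chi_i\in\Irr(G\mid\lambda_i)$ of degree $m_{\lambda_i}$. Then $\bigcap_i\ker\chi_i$ meets $Z$ trivially; since every minimal normal subgroup of $G$ lies in $Z$, this forces $\bigcap_i\ker\chi_i=1$, so $\bigoplus_i\chi_i$ is faithful and $\rdim(G)\le\sum_i m_{\lambda_i}\le\frac{d}{\sqrt{|Z|}}\sqrt{|G|}$. Since $|Z|\ge 2^d$, one has $\frac{d}{\sqrt{|Z|}}\le 1$ unless $2^d\le|Z|<d^2$, and this holds only for $d=3$, $|Z|=8$, i.e.\ $Z\cong C_2\times C_2\times C_2$. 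So in this case alternative (1) holds except possibly when $Z\cong C_2^3$.

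The case $\Soc(G)\not\le Z$ is the step I expect to be the main obstacle. Let $N$ be a noncentral minimal normal subgroup. Since $N$ is minimal normal, $\ker\chi\cap N\in\{1,N\}$ for every $\chi\in\Irr(G)$, so any $\chi\in\Irr(G)$ lying over a nonprincipal $\theta\in\Irr(N)$ has $\ker\chi\cap N\le\ker\theta\neq N$, hence $\ker\chi\cap N=1$. Inflating a faithful representation of $G/N$ and adjoining such a $\chi$ of least degree $m_N$ gives $\rdim(G)\le\rdim(G/N)+m_N$; by induction $\rdim(G/N)\le\sqrt{|G/N|}$ (or $\tfrac{3}{\sqrt{8}}\sqrt{|G/N|}$ in the exceptional case), so everything reduces to a sufficiently sharp bound on $m_N$. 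This is where the real work lies: one needs Clifford theory and Itô's theorem to control character degrees and inertia indices, and the structure of (almost) transitive linear actions and of groups of central type to treat the case where $G$ acts nearly transitively on $\Irr(N)$ (where in fact one expects to recover a faithful irreducible character); the same analysis covers nonabelian chief factors, and one checks it never produces a new exceptional group. (A variant avoiding $G/N$ is to run the covering construction of the previous paragraph while simultaneously arranging $\ker\chi_1\cap B=1$, where $B$ is the product of the noncentral minimal normal subgroups and $B\cap Z=1$, at the cost of a more delicate estimate for the degree of a character lying over a prescribed product of central and noncentral characters.)

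Finally, suppose $Z\cong C_2\times C_2\times C_2$; the previous case lets us assume $\Soc(G)\le Z$, so $\Soc(G)=\Omega_1(Z(G))=Z(G)=C_2^3$. By the basic identity, for each nonprincipal $\lambda$ either $\Irr(G\mid\lambda)$ is a singleton consisting of a character fully ramified over $G/Z$ of degree $\sqrt{|G|/8}$, or $|\Irr(G\mid\lambda)|\ge 2$ and then its two smallest members give $m_\lambda\le\sqrt{|G|}/4$. If the second alternative occurs for some $\lambda$, extend $\lambda$ to an $\FF_2$-basis $\lambda,\lambda',\lambda''$ of $\Irr(Z)$ and cover with characters of degrees $\le\sqrt{|G|}/4$, $\le\sqrt{|G|/8}$, $\le\sqrt{|G|/8}$, so that $\rdim(G)\le\bigl(\tfrac14+\tfrac{1}{\sqrt{2}}\bigr)\sqrt{|G|}<\sqrt{|G|}$, i.e.\ (1). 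Otherwise every character in $\Irr(G\mid Z(G))$ is fully ramified over $G/Z(G)$; such a $\chi$ vanishes off $Z(\chi)=Z$, so its restriction to an odd Sylow subgroup $P$ is a multiple of the regular character of $P$, whence $|P|\mid\chi(1)=\sqrt{|G|/8}$ and the odd part of $|G|$ divides the $2$-power $|G|_2/8$ — so $G$ is a $2$-group, matching the statement. Now choosing an $\FF_2$-basis $\lambda_1,\lambda_2,\lambda_3$ of $\Irr(Z)$ and the unique $\chi_i\in\Irr(G\mid\lambda_i)$, the sum $\chi_1\oplus\chi_2\oplus\chi_3$ is faithful (its kernel meets $Z$ trivially and $\Soc(G)\le Z$), so $\rdim(G)\le 3\sqrt{|G|/8}=\tfrac{3}{\sqrt{8}}\sqrt{|G|}$; conversely, in any faithful $\bigoplus_j\chi_{(j)}$ the central characters of the constituents must generate $\Irr(Z)\cong\FF_2^3$, so at least three constituents have pairwise distinct nonprincipal central characters and are therefore fully ramified of degree $\sqrt{|G|/8}$ each, forcing the dimension to be at least $3\sqrt{|G|/8}$. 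Hence $\rdim(G)=\tfrac{3}{\sqrt{8}}\sqrt{|G|}$, which is alternative (2).
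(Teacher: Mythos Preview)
Your treatment of the case $\Soc(G)\le Z(G)$ is correct and cleaner than the paper's route: the single inequality $\rdim(G)\le d(Z)\sqrt{|G:Z|}=\tfrac{d(Z)}{\sqrt{|Z|}}\sqrt{|G|}$, together with $|Z|\ge 2^{d(Z)}$, immediately isolates $Z\cong C_2^3$ as the only candidate for an exception, and your analysis of that exceptional case is complete. Your argument that $G$ must then be a $2$-group (via $|P|\mid\chi(1)$ for any odd Sylow subgroup $P$, since $\chi$ vanishes off $Z$ and $P\cap Z=1$) is also more elementary than the paper's, which invokes the DeMeyer--Janusz theorem on groups of central type.

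The case $\Soc(G)\not\le Z(G)$, however, is a genuine gap. You describe only the \emph{shape} of an argument (``one needs Clifford theory and It\^o's theorem \dots\ the same analysis covers nonabelian chief factors, and one checks it never produces a new exceptional group'') without executing it. The inductive inequality $\rdim(G)\le\rdim(G/N)+m_N$ would require $m_N\le\sqrt{|G|}\bigl(1-|N|^{-1/2}\bigr)$ in the generic step, and worse when $G/N$ itself falls into class~(ii); when $G$ acts nearly transitively on $\Irr(N)\setminus\{1\}$ this bound is far from obvious, and the reduction to a faithful irreducible character that you hint at would itself need a serious argument.

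The paper sidesteps induction entirely, and in fact follows essentially the ``variant'' you mention in your last parenthetical remark. Writing $\Soc(G)=A(G)\times T(G)$ with $A(G)=A_1\times\cdots\times A_t$ a product of (not necessarily central) abelian minimal normal subgroups and $T(G)$ the product of the nonabelian ones, it chooses for each $i$ a character $\chi_i\in\Irr(G)$ lying over $\lambda_i\times 1_{B_i}\times\varphi$, where $B_i=\prod_{j\ne i}A_j$ and $\varphi\in\Irr(T(G))$ is faithful. Then $\chi_1+\cdots+\chi_t$ is faithful and $\chi_i(1)<\sqrt{|G|/|B_i|}$, giving $\rdim(G)<\sqrt{|G|}\sum_{j}\prod_{k\ne j}|A_k|^{-1/2}$. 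An elementary lemma shows this last factor is $<1$ except for a short explicit list of tuples $(|A_1|,\dots,|A_t|)$ with $2\le t\le 5$. Gasch\"utz's criterion (a faithful irreducible exists when the $A_i$ are pairwise nonisomorphic $G$-modules, in particular when the $|A_i|$ are distinct) prunes most of this list, and the surviving cases are handled by sharpening the bound on each $\chi_i(1)$ to $|G/B_i:Z(G/B_i)|^{1/2}$ and, where necessary, passing via Clifford theory through the centralizer of $A(G)/B_i$. No induction on $|G|$ is used.
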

  
  We have also shown that the equality $\rdim(G)=\sqrt{|G|}$ just holds in groups that are similar to those in (ii) above.
  
  \begin{thmB}
  Let $G$ be a finite group. Then  $\rdim(G)=\sqrt{|G|}$  if and only if one of the following holds:
  \begin{enumerate}
  \item
   $G$ is a $2$-group with socle $\Soc(G)=Z(G)=C_2\times C_2$ and all characters in $\Irr(G|Z(G))$ are fully ramified with respect to $Z(G)$.
   \item
    $G$ is a $2$-group with socle $\Soc(G)=Z(G)=C_2\times C_2\times C_2\times C_2$ and all characters in $\Irr(G|Z(G))$ are fully ramified with respect to $Z(G)$.
    \end{enumerate}
  \end{thmB}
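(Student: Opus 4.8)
The plan is to isolate the equality case inside (a sharpening of) the proof of Theorem~A. Since $\rdim(G)$ is an integer, $\rdim(G)=\sqrt{|G|}$ forces $|G|$ to be a perfect square; and since $\tfrac{3}{\sqrt{8}}\neq 1$, such a $G$ cannot be one of the exceptional groups of Theorem~A(ii), so $\rdim(G)\le\sqrt{|G|}$ holds in the non-exceptional way. I would then invoke the reduction used to prove Theorem~A to conclude that $G$ must be a $2$-group: for odd $p$-groups the pertinent factor below is already $<1$, while for groups that are not $p$-groups the Clifford-theoretic part of that argument yields a strict inequality. So write $|G|=2^{2k}$, $Z=Z(G)$, $S=\Soc(G)=\Omega_1(Z)\cong C_2^{\,r}$, and $|Z|=2^{z}$, so that $r\le z\le 2k$.

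The engine is a formula for $\rdim$ of a $p$-group. A faithful representation, after discarding repeated and redundant irreducible constituents, is a sum $\bigoplus_{\chi\in\mathcal A}\chi$ of distinct irreducibles with $\bigcap_{\chi\in\mathcal A}\ker\chi=1$; since every nontrivial normal subgroup of a $p$-group meets $S$ nontrivially, this is equivalent to the linear characters $\mu_\chi\in\Irr(S)$ defined by $\chi_S=\chi(1)\mu_\chi$ spanning $\widehat{S}$, and an optimal $\mathcal A$ may be chosen so that these $\mu_\chi$ form an $\mathbb{F}_2$-basis of $\widehat{S}$ with each $\chi$ of least degree over $\mu_\chi$. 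Setting $d_\mu=\min\{\chi(1):\chi\in\Irr(G),\ \chi_S=\chi(1)\mu\}$ (a power of $2$), we get $\rdim(G)=\min_{B}\sum_{\mu\in B}d_\mu$, the minimum over $\mathbb{F}_2$-bases $B$ of $\widehat{S}$. The classical bound $\chi(1)^2\le|G:Z|$ --- an equality exactly when $\chi$ is fully ramified over $Z$ --- then gives $d_\mu\le 2^{\lfloor k-z/2\rfloor}$ for every nontrivial $\mu$, hence $\rdim(G)\le r\cdot 2^{\lfloor k-z/2\rfloor}$.

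Now I would run the arithmetic. From $2^{k}=\rdim(G)\le r\cdot 2^{\lfloor k-z/2\rfloor}$ together with $r\le z\le 2k$, a short analysis of the two parities of $z$ shows that $z$ must be even, then that $r\in\{2,4\}$ and $z=r$, and that every inequality used is in fact an equality. Equality of $z$ and $r$ gives $Z=S\cong C_2^{\,r}$; and since now each basis $B$ satisfies $\sum_{\mu\in B}d_\mu=r\sqrt{|G:Z|}$ while every $d_\mu\le\sqrt{|G:Z|}$, we obtain $d_\mu=\sqrt{|G:Z|}$ for every nontrivial $\mu\in\widehat{Z}$, so every $\chi\in\Irr(G|Z)$ has $\chi(1)\ge\sqrt{|G:Z|}$ and therefore $\chi(1)^2=|G:Z|$, i.e.\ $\chi$ is fully ramified. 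This proves the necessity of the two listed conditions. Conversely, for a $2$-group $G$ with $Z=S\cong C_2^{\,r}$, $r\in\{2,4\}$, and all characters in $\Irr(G|Z)$ fully ramified, one has $d_\mu=\sqrt{|G:Z|}$ for every nontrivial $\mu$, whence $\rdim(G)=\min_{B}\sum_{\mu\in B}d_\mu=r\sqrt{|G:Z|}=\tfrac{r}{2^{r/2}}\sqrt{|G|}=\sqrt{|G|}$; and $C_2\times C_2$ and $C_2^{\,4}$ show both families are nonempty.

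The step I expect to be the real obstacle is this reduction to the $2$-group case with the inequality kept strict --- that is, showing that if $G$ is not a $2$-group, or is a $2$-group whose center is not elementary abelian, or carries some non-fully-ramified character lying over $Z(G)$, then $\rdim(G)<\sqrt{|G|}$ rather than merely $\rdim(G)\le\sqrt{|G|}$. For $p$-groups this is immediate from the $d_\mu$-formula and the fact that a non-extremal character degree over $Z$ is at most $\tfrac12\sqrt{|G:Z|}$; for the remaining cases one has to re-run the reduction behind Theorem~A and check that it produces no equality. Everything after that point is the routine bookkeeping sketched above.
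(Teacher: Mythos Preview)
Your proposal is correct and, for the hard part, takes exactly the same route as the paper: Theorems~A and~B are proved simultaneously there (as one theorem), by the case analysis on the decomposition $\Soc(G)=A_1\times\cdots\times A_t\times T(G)$, the arithmetic reduction to a short list of tuples $(a_1,\dots,a_t)$, and in each relevant subcase the DeMeyer--Janusz result to force $G$ to be a $2$-group. So the ``reduction to the $2$-group case with the inequality kept strict'' that you flag as the obstacle is not a stand-alone lemma you can invoke; it \emph{is} the case analysis, and you must indeed re-run it. Your $2$-group endgame is slightly cleaner than the paper's: you argue directly from $\rdim(G)=\min_{B}\sum_{\mu\in B}d_\mu$ and the bound $d_\mu\le 2^{\lfloor k-z/2\rfloor}$ to pin down $z=r\in\{2,4\}$ and then conclude that every nontrivial $\mu$ has $d_\mu=\sqrt{|G:Z|}$ (since every such $\mu$ lies in some basis), whereas the paper routes this through its Theorem~C and the function $f_2(n)$. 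Both approaches land in the same place.
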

  
  It is interesting to note that the groups with $\rdim(G)$ as large as it can be in comparison with $|G|$ turn out to be $2$-groups. There are nonabelian groups among those that appear in Theorem A (ii) and in Theorem B: consider for instance the Sylow $2$-subgroups of $\GL(3,8)$ and $\GL(3,4)$ and $\GL(3,16)$. They are examples of nonabelian groups with the structure described in Theorem A(ii), Theorem B(i) and Theorem B(ii), respectively.  They are also examples of the so-called Heisenberg groups. 
  There are nonnilpotent groups $G$ with $\rdim(G)$ arbitrarily close to $\sqrt{|G|}$: consider the Frobenius groups of order $(p-1)p$ for any prime $p$. 
  
  For any positive integer $n$ and prime $p$, the maximal representation dimension among $p$-groups of order $p^n$ was determined in \cite{ckr}. As pointed out in that paper, if $f_p(n)=\max_{r\in\mathbb{N}}(rp^{\lfloor(n-r)/2\rfloor})$, then $\rdim(G)\leq f_p(n)$ for any $G$ $p$-group of order $p^n$. It was shown that, with a few exceptions for $p$ and $n$ listed in Theorem 1 of \cite{ckr}, there exists a $p$-group $G$ of order $p^n$ such that $\rdim(G)=f_p(n)$. When $p=2$ the exceptional cases are $n=5$ and $n=7$.  In the next result we characterize the $2$-groups where this equality occurs. It will be used in the proof of Theorems A and B.
    
  \begin{thmC}
  Let $G$ be a $2$-group of order $2^n$ for some positive integer $n\not\in\{1,5,7\}$. Then $\rdim(G)=f_2(n)$ if and only if one of the following holds:
  \begin{enumerate}
  \item 
  $n$ is odd, $Z(G)$ is elementary abelian of order $8$ and all characters in $\Irr(G|Z(G))$ are fully ramified with respect to $G/Z(G)$.
  \item
  $n$ is even,  $Z(G)$ is elementary abelian of order either $4$ or $16$ and all characters in $\Irr(G|Z(G))$ are fully ramified with respect to $G/Z(G)$.
  \end{enumerate}
  \end{thmC}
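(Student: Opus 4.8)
Recall that $\rdim(G)=\min\{\sum_{\chi\in S}\chi(1): S\subseteq\Irr(G),\ \bigcap_{\chi\in S}\Ker\chi=1\}$, and that when $G$ is a $2$-group a normal subgroup is trivial exactly when it meets $\Soc(G)=\Omega_1(Z(G))$ trivially. Writing $r$ for the rank of $\Soc(G)$ and, for $\chi\in\Irr(G)$, $\lambda_\chi\in\widehat{\Soc(G)}$ for the linear character with $\chi_{\Soc(G)}=\chi(1)\lambda_\chi$, faithfulness of $\bigoplus_{\chi\in S}\chi$ is equivalent to $\{\lambda_\chi:\chi\in S\}$ spanning $\widehat{\Soc(G)}\cong\FF_2^r$. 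The bound $\rdim(G)\le f_2(n)$ of \cite{ckr} comes from fixing hyperplanes $H_1,\dots,H_r$ of $\Soc(G)$ with $\bigcap_i H_i=1$ and, for each $i$, a character $\chi_i\in\Irr(G/H_i)$ not containing the central order-$2$ subgroup $\Soc(G)/H_i$ in its kernel; then $\chi_i(1)^2\le|G/H_i:Z(G/H_i)|\le 2^{n-r}$, so $\chi_i(1)\le 2^{\lfloor(n-r)/2\rfloor}$ and $\rdim(G)\le r\,2^{\lfloor(n-r)/2\rfloor}\le f_2(n)$. I will use this construction repeatedly.

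For the forward implication, assume $\rdim(G)=f_2(n)$. Then the above inequalities collapse to equalities, so $r\,2^{\lfloor(n-r)/2\rfloor}=f_2(n)$; a short arithmetic check shows the maximizer is $r=3$ when $n$ is odd and $r\in\{2,4\}$ when $n$ is even, and in every such case $n-r$ is even, so $\rdim(G)=r\,2^{(n-r)/2}$. The decisive step is to prove $Z(G)$ is elementary abelian, hence equal to $\Soc(G)$: if not, $|Z(G)|=2^{z}$ with $z>r$, so in the construction the central subgroup $Z(G)/H_i$ of $G/H_i$ has order $2^{z-r+1}\ge 4$, forcing $\chi_i(1)^2\le|G/H_i|/4=2^{n-r-1}$ and $\rdim(G)\le r\,2^{(n-r)/2-1}=f_2(n)/2$, a contradiction. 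It remains to see every $\chi\in\Irr(G|Z(G))$ is fully ramified with respect to $G/Z(G)$; since $Z(G)\le Z(\chi)$ we always have $\chi(1)^2\le|G:Z(G)|=2^{n-r}$, so we must exclude $\chi(1)\le 2^{(n-r)/2-1}$. If some $\chi_0\in\Irr(G|Z(G))$ had such a small degree, then, choosing hyperplanes $H_1,\dots,H_{r-1}$ of $Z(G)=\Soc(G)$ with $\lambda_{\chi_0},\lambda_{\chi_1},\dots,\lambda_{\chi_{r-1}}$ a basis of $\widehat{Z(G)}$, the representation $\chi_0\oplus\chi_1\oplus\cdots\oplus\chi_{r-1}$ is faithful of dimension at most $2^{(n-r)/2-1}+(r-1)2^{(n-r)/2}<r\,2^{(n-r)/2}=f_2(n)$, again a contradiction. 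This gives (i) and (ii).

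Conversely, suppose $Z(G)$ is elementary abelian of order $2^r$ with $r$ as in (i) or (ii), so $f_2(n)=r\,2^{(n-r)/2}$, and all characters in $\Irr(G|Z(G))$ are fully ramified with respect to $G/Z(G)$. If $G=Z(G)$ then $G\cong C_2^r$ and $\rdim(G)=r=f_2(n)$, so assume $G\neq Z(G)$. I claim $Z(G)\le G'$: otherwise some linear character $\mu$ of $G$ is nontrivial on $Z(G)$, hence $\mu\in\Irr(G|Z(G))$ has degree $1$, and being fully ramified would force $|G:Z(G)|=1$ (via Gallagher's correspondence, see \cite{isa}, one could even conclude every irreducible character of $G/Z(G)$ has degree $2^{(n-r)/2}$), which is absurd. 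So no linear character of $G$ restricts nontrivially to $Z(G)$, whence every $\chi\in\Irr(G)$ with $\lambda_\chi\neq 1$ lies in $\Irr(G|Z(G))$ and thus $\chi(1)=2^{(n-r)/2}$. In any faithful $\bigoplus_{\chi\in S}\chi$ the set $\{\chi\in S:\lambda_\chi\neq 1\}$ has at least $r$ elements (their $\lambda_\chi$'s must span the $r$-dimensional space $\widehat{Z(G)}$), so $\sum_{\chi\in S}\chi(1)\ge r\,2^{(n-r)/2}=f_2(n)$; with the upper bound this gives $\rdim(G)=f_2(n)$.

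The main obstacle is the middle step of the forward implication — upgrading ``$\Soc(G)$ has the right rank'' to ``$Z(G)$ is elementary abelian'' — together with the dual structural input on the converse side, namely extracting $Z(G)\le G'$ from full ramification; both reduce to careful bookkeeping of the estimate $\chi_i(1)^2\le|G/H_i:Z(G/H_i)|$ and of which irreducible characters can appear in a minimal faithful set. The restriction $n\notin\{1,5,7\}$ is inherited from \cite{ckr}; it is genuinely needed for $n=1$, where $\rdim(C_2)=f_2(1)=1$ although $C_2$ satisfies neither (i) nor (ii), while for $n\in\{5,7\}$ both sides of the equivalence are empty by \cite{ckr}.
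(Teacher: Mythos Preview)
Your argument is correct and follows essentially the same route as the paper: the paper packages the core step as a general characterization (its Theorem~\ref{even}) of $p$-groups with $\rdim(G)=r\,p^{(n-r)/2}$ when $n-r$ is even, and then observes that for $p=2$ the maximizer of $s\mapsto s\,2^{\lfloor(n-s)/2\rfloor}$ forces $r=3$ (for $n$ odd) or $r\in\{2,4\}$ (for $n$ even), exactly as you do. One remark: your $Z(G)\le G'$ detour in the converse is unnecessary, since once $Z(G)=\Soc(G)$ the condition $\lambda_\chi\neq1$ already places $\chi$ in $\Irr(G|Z(G))$ by definition, and the paper's Corollary~\ref{pgrp} shows directly that a minimal faithful character decomposes into exactly $r$ such irreducibles.
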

  
  When $p$ is odd, the exceptional case in Theorem 1 of \cite{ckr} is $n=4$. We have the following.
  
  \begin{thmD}
  Let $p$ be an odd prime and let $G$ be a $p$-group of order $p^n$ for some positive integer $n\not\in\{1,4\}$. Then $\rdim(G)=f_p(n)$ if and only if one of the following holds:
  \begin{enumerate}
  \item 
  $n$ is odd, $Z(G)$ has order $p$ and all characters in $\Irr(G|Z(G))$ are fully ramified with respect to $G/Z(G)$.
  \item
  $n$ is even,  $Z(G)$ is elementary abelian of order $p^2$  and all characters in $\Irr(G|Z(G))$ are fully ramified with respect to $G/Z(G)$.
  \end{enumerate}
  \end{thmD}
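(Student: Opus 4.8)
Write $Z=Z(G)$. For a $p$-group every nontrivial normal subgroup meets $Z$, so $\Soc(G)=\Omega_1(Z)$ is elementary abelian, say of rank $k$; and a character $\rho=\sum_i\chi_i$ of $G$ is faithful precisely when $\bigcap_i\ker\chi_i=1$. Since $\ker\chi_i\cap\Soc(G)=\ker\lambda_i$ for $\lambda_i:=\chi_i|_{\Soc(G)}/\chi_i(1)\in\widehat{\Soc(G)}$, and a normal subgroup meeting $Z$ trivially is trivial, this happens if and only if $\{\lambda_i\}$ spans $\widehat{\Soc(G)}$. Hence
$\rdim(G)=\min\sum_{j=1}^{k}\chi_j(1)$, the minimum over $\chi_1,\dots,\chi_k\in\Irr(G)$ with $\{\lambda_j\}$ a basis of $\widehat{\Soc(G)}$; this set of choices is nonempty because any character of $\Soc(G)$ extends to $Z$ and then lies under some irreducible character of $G$.

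The ``if'' direction is immediate from this. If $n$ is odd, $Z\cong C_p=\Soc(G)$, and every $\chi\in\Irr(G\mid Z)$ is fully ramified over $G/Z$, then such a $\chi$ has $\ker\chi\cap Z=1$, hence is faithful of degree $\sqrt{|G:Z|}=p^{(n-1)/2}$; a faithful representation must have a constituent in $\Irr(G\mid Z)$, so $\rdim(G)=p^{(n-1)/2}=f_p(n)$. If $n$ is even, $Z\cong C_p\times C_p=\Soc(G)$, and every $\chi\in\Irr(G\mid Z)$ is fully ramified over $G/Z$, then two such characters over linear characters of $Z$ with distinct kernels give a faithful representation of dimension $2\sqrt{|G:Z|}=2p^{(n-2)/2}$, and a faithful representation needs at least two constituents from $\Irr(G\mid Z)$ since $\Soc(G)$ has rank $2$; so $\rdim(G)=2p^{(n-2)/2}=f_p(n)$.

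For the ``only if'' direction, the key bound is $\rdim(G)\le k\,p^{\lfloor(n-k)/2\rfloor}$: for a basis direction $\lambda$ with $N=\ker\lambda\le\Soc(G)$ (so $|G/N|=p^{n-k+1}$), the group $\Soc(G)/N\cong C_p$ is central in $G/N$, hence any character of $G/N$ over its nontrivial character has degree-squared at most $|G/N:Z(G/N)|\le p^{n-k}$; inflating $k$ such characters yields the bound, and $k\,p^{\lfloor(n-k)/2\rfloor}\le f_p(n)$. Therefore $\rdim(G)=f_p(n)$ forces $k$ to be a maximiser of $r\mapsto r\,p^{\lfloor(n-r)/2\rfloor}$: for $n$ even this maximum is attained only at $r=2$, and for $n$ odd and $p\ge 5$ only at $r=1$, so $\operatorname{rank}(\Soc(G))$ equals $2$ or $1$ accordingly. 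It also forces the displayed construction to be optimal, which is what the rest of the argument exploits.

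Given the rank, I would finish with two perturbation arguments, both using that $n-k$ is even in the cases at hand. First, if $Z$ is not elementary abelian then $|Z|\ge p^{k+1}$; choosing, for each cyclic direct summand of $Z$, a character of $G$ over the linear character of $Z$ faithful on that summand and trivial on the others, one gets $k$ characters whose central characters form a basis of $\widehat{\Soc(G)}$ but whose degrees are each at most $\sqrt{|G:Z|}\le p^{(n-k)/2-1}$, so $\rdim(G)<k\,p^{(n-k)/2}=f_p(n)$, a contradiction; hence $Z$ is elementary abelian of order $p$ or $p^2$ as dictated by $n$. Second, if some $\chi\in\Irr(G\mid Z)$ had $\chi(1)^2<|G:Z|$, then replacing one of the $k$ extremal constituents by $\chi$ (keeping the central characters a basis) again gives a faithful representation of dimension $<f_p(n)$; so every $\chi\in\Irr(G\mid Z)$ satisfies $\chi(1)^2=|G:Z|$, i.e.\ is fully ramified over $G/Z$. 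The main obstacle is the case $p=3$ with $n$ odd, where $r=3$ also maximises $r\,p^{\lfloor(n-r)/2\rfloor}$, so the argument above does not by itself exclude $\operatorname{rank}(\Soc(G))=3$; this is the exact counterpart of the exceptional values $n\in\{5,7\}$ for $p=2$ in Theorem C, and must be handled by the same bookkeeping (showing that rank $3$ either forces a structure already covered or does not occur, apart from a short list of small $n$). The standard inputs used throughout are $\chi(1)^2\le|G:Z(\chi)|$, the existence of an irreducible constituent over any character of a central subgroup, and the triviality of a normal subgroup of a $p$-group that meets the centre trivially.
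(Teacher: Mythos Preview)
Your approach is essentially the same as the paper's: both use the bound $\rdim(G)\le r\,p^{\lfloor(n-r)/2\rfloor}$ (Corollary~\ref{pgrp}) to force the rank $r$ of $Z(G)$ to be a maximiser of $s\mapsto s\,p^{\lfloor(n-s)/2\rfloor}$, and then invoke the characterisation of equality (your two perturbation arguments are exactly Theorem~\ref{even}). The ``if'' direction and the $n$ even case are fine in both.

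The obstacle you flag for $p=3$ with $n$ odd is not a defect of your write-up; it is a genuine problem with the statement and with the paper's proof. The paper asserts that for $n$ odd the maximum of $s\,p^{\lfloor(n-s)/2\rfloor}$ is attained only at $s=1$, but for $p=3$ one has $3\cdot 3^{(n-3)/2}=3^{(n-1)/2}$, so $s=3$ ties with $s=1$. This is not a harmless oversight: groups with $Z(G)$ elementary abelian of order $27$ and all characters in $\Irr(G\mid Z(G))$ fully ramified over $G/Z(G)$ do exist and have $\rdim(G)=3\cdot 3^{(n-3)/2}=f_3(n)$, yet they satisfy neither (i) nor (ii). The simplest instance is $G=C_3\times C_3\times C_3$ with $n=3$: here $\rdim(G)=3=f_3(3)$ while $|Z(G)|=27\neq 3$. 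For larger odd $n$ one can take, e.g., the Heisenberg group over $\FF_{27}$ (order $3^9$), which has $\rdim(G)=3\cdot 27=3^4=f_3(9)$ and centre of order $27$. Thus the ``only if'' direction of Theorem~D is false for $p=3$ as stated; the correct statement for $p=3$, $n$ odd, must allow the additional possibility that $Z(G)$ is elementary abelian of order $27$ with all characters in $\Irr(G\mid Z(G))$ fully ramified. Your instinct that this case ``must be handled by the same bookkeeping'' is right in spirit, but the bookkeeping does not eliminate rank $3$---it adds a third alternative.
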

  
  We obtain Theorems C and D as a consequence of a more general characterization of $p$-groups with center of rank $r$ and $\rdim(G)=rp^{\lfloor(n-r)/2\rfloor}$ (see Theorem \ref{even} and Theorem \ref{odd}).

  There is a related invariant that has been more studied with a group-theoretic motivation. This is the smallest dimension of a faithful permutation representation, denoted by $\mu(G)$.  It was shown by P. Neumann \cite{neu} that there are groups $G$ with normal subgroups $N$ such that $\mu(G/N)>\mu(N)$. 
  L.Kov\'acs and C. Praeger \cite{kp} showed that $\mu(G/N)\leq\mu(G)$ holds whenever $G/N$ does not have nontrivial abelian normal subgroups. Holt and J. Walton \cite{hw} proved that $\mu(G/N)\leq(4.5)^{\mu(G)-1}$.  As, for instance,  the double cover of $M_{12}$ shows, it is not true that $\rdim(G/N)\leq \rdim(G)$ even when $G/N$ does not have nontrivial abelian normal subgroups. As a consequence of Jordan's theorem, it is easy to obtain the following variation of the Holt-Walton theorem for representation dimension.
  
  \begin{thmE}
  Let $G$ be a finite group and $N$ a normal subgroup of $G$. Write $\rdim(G)=n$.  Then $$\rdim(G/N)\leq nj(n),$$
  where $j(n)$ is any bound in Jordan's theorem.
  \end{thmE}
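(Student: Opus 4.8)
The plan is to combine Jordan's theorem with induction of characters. Since $\rdim(G)=n$, we may regard $G$ as a subgroup of $\GL(n,\mathbb{C})$, and Jordan's theorem supplies an abelian normal subgroup $A\trianglelefteq G$ with $|G:A|\leq j(n)$. Being an abelian subgroup of $\GL(n,\mathbb{C})$, the group $A$ is simultaneously diagonalizable, hence embeds into $(\mathbb{C}^\times)^n$; in particular $A$ is generated by at most $n$ elements.

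Next I would pass to $G/N$. The image $AN/N$ is an abelian normal subgroup of $G/N$, and it is an epimorphic image of $A$, so it too is generated by at most $n$ elements. Writing $AN/N$ as a direct product of at most $n$ cyclic groups and taking a faithful linear character on each factor produces a faithful character of $AN/N$ of degree at most $n$, so $\rdim(AN/N)\leq n$. Moreover $|G/N:AN/N|=|G|/|AN|\leq|G|/|A|=|G:A|\leq j(n)$, since $|AN|\geq|A|$. Now let $\psi$ be a (not necessarily irreducible) faithful character of $AN/N$ of degree at most $n$. Then $\psi^{G/N}$ is a character of $G/N$ of degree $|G/N:AN/N|\,\psi(1)\leq nj(n)$, and $\Ker\psi^{G/N}$ is the normal core in $G/N$ of $\Ker\psi=1$, hence trivial. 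Therefore $\rdim(G/N)\leq nj(n)$.

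There is no genuine obstacle here; the only points to state with a little care are the two standard facts used: a finite abelian group generated by $r$ elements has representation dimension at most $r$, and the number of generators cannot increase under quotients; and the kernel of an induced character $\psi^K$ equals the core in $K$ of $\Ker\psi$, so that inducing a faithful character of a subgroup yields a faithful character of the whole group. The index estimate $|G/N:AN/N|\leq|G:A|$ is immediate from $|AN|\geq|A|$.
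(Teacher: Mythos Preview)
Your proof is correct and follows essentially the same route as the paper's: apply Jordan's theorem to get an abelian $A$ of index at most $j(n)$, observe that $AN/N$ is abelian of rank at most $d(A)\leq n$ (hence $\rdim(AN/N)\leq n$), and induce a faithful character of $AN/N$ to $G/N$. You supply more detail than the paper does---in particular the diagonalizability argument for $d(A)\leq n$ and the explicit index bound $|G/N:AN/N|\leq|G:A|$---but the structure is identical.
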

  
 It is an old conjecture of Praeger and Easdown \cite{ep} that if $N\trianglelefteq G$ and $G/N$ is abelian, then $\mu(G/N)\leq\mu(G)$. This conjecture still remains open. In the case when $G$ is a $p$-group with an abelian maximal  subgroup it was proved in \cite{fra} that $\mu(G/G')\leq\mu(G)$. We will see that it is not true that $\rdim(G/G')\leq \rdim(G)$ even in this case. However, we can obtain the following bound.
 
 \begin{thmF}
   Let $G$ be a finite group and $N$ a normal subgroup of $G$ with $G/N$ abelian. Write $\rdim(G)=n$. Then
     $$\rdim(G/N)\leq3n/2.$$
\end{thmF}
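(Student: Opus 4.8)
The plan is to reduce first to the case $N=G'$, and then to prove $\rdim(G/G')\le\tfrac32\rdim(G)$ by induction on $|G|$, assembling a faithful representation of $G/G'$ from the irreducible constituents of a minimal faithful representation of $G$.

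For the reduction: since $G/N$ is abelian we have $G'\le N$, so $G/N$ is a quotient of the abelian group $G/G'$. For an abelian group $A$, a faithful complex representation is the same data as a generating set of the (isomorphic) dual group, so $\rdim(A)$ equals the minimal number $d(A)$ of generators of $A$; and $d$ does not increase under quotients. Hence $\rdim(G/N)\le\rdim(G/G')=d(G^{\mathrm{ab}})$, and it suffices to bound $d(G^{\mathrm{ab}})$ by $3n/2$, where $n=\rdim(G)$.

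Now induct on $|G|$. Fix a faithful representation $\rho=\chi_1\oplus\dots\oplus\chi_k$ of minimal degree $n=\sum_i\chi_i(1)$, with the $\chi_i$ pairwise distinct, so that $\bigcap_i\Ker\chi_i=1$. If $\rho$ is irreducible, then $G$ is an irreducible subgroup of $\GL(n,\mathbb{C})$, and then $d(G^{\mathrm{ab}})$ is small: the centre of $G$ is cyclic, the image in $G^{\mathrm{ab}}$ of the ``extraspecial-type'' part of the generalized Fitting subgroup of $G$ has rank $O(\log n)$, and the quasisimple components of $G$ together with the image of $G$ in the outer automorphism group of its generalized Fitting subgroup contribute $O(1)$. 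So $d(G^{\mathrm{ab}})=O(\log n)\le 3n/2$, with much room to spare; this base case uses only the structure theory of complex linear groups and is not the crux. If $k\ge 2$, then by minimality of $\rho$ no single $\chi_i$ is faithful, so $1\ne\Ker\chi_i\trianglelefteq G$ and $|G/\Ker\chi_i|<|G|$ for every $i$. Since $G/\Ker\chi_i$ has the faithful irreducible character $\chi_i$, we get $\rdim(G/\Ker\chi_i)\le\chi_i(1)$, and the inductive hypothesis applied to $G/\Ker\chi_i$ produces a faithful representation $\theta_i$ of $(G/\Ker\chi_i)^{\mathrm{ab}}=G/(\Ker\chi_i)G'$ of degree at most $\tfrac32\chi_i(1)$ (for $\chi_i$ linear, take $\theta_i=\chi_i$). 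Inflating the $\theta_i$ to $G$ and setting $\sigma=\bigoplus_i\theta_i$, we obtain a representation of $G$ of degree $\le\tfrac32\sum_i\chi_i(1)=\tfrac32 n$ whose kernel is the normal subgroup $\bigcap_i(\Ker\chi_i)G'$, which contains $G'$.

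The crux --- and the step I expect to be the main obstacle --- is to show $\bigcap_i(\Ker\chi_i)G'=G'$; then $\sigma$ factors through $G/G'$ and is faithful there, so $d(G^{\mathrm{ab}})=\rdim(G/G')\le\dim\sigma\le 3n/2$ and the induction closes. This equality really does require the minimality of $\rho$: for an arbitrary faithful $\rho$ it can fail --- in $G=C_2\times D_8$, say, one can pick two faithful degree-$2$ constituents whose kernels have overlapping images in $G^{\mathrm{ab}}$. I would try to show that a minimal faithful $\rho$ can be chosen so that the subgroups $(\Ker\chi_i)G'/G'$ of $G^{\mathrm{ab}}$ meet trivially: if some $g\notin G'$ lay in every $(\Ker\chi_i)G'$, choose a maximal subgroup $M=\Ker\lambda$ of prime index with $g\notin M$; then $\Ker\chi_i\not\le M$ for every $i$, and one either replaces the offending $\chi_i$ by a twist $\chi_i\lambda^j$ (which leaves $\dim\rho$ unchanged and can be arranged to shrink $\bigcap_i(\Ker\chi_i)G'$) or else analyses $\rho|_M$ together with the $\Phi(G)=G'G^p$--isotypic decomposition of the representation space, which reduces the sharp estimate to a single isotypic block over a $p$-group. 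Carrying this out, while verifying that the budget $\tfrac32\chi_i(1)$ per constituent is never overspent --- with the degree-$2$ constituents the extremal case --- is where the real work lies, and it is what pins down $3/2$ as the correct constant, attained exactly by the group $C_4*D_8$ of order $16$ and its direct powers, for which $\rdim(G/G')=\tfrac32\rdim(G)$.
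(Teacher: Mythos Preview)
The paper's proof is a one-liner that bypasses all of your inductive machinery: since $G/N$ is abelian, $\rdim(G/N)=d(G/N)\le d(G)$, and then the Kov\'acs--Robinson theorem \cite{kr} gives $d(G)\le 3n/2$ for any finite subgroup $G\le\GL(n,\mathbb{C})$. That is the whole argument.

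Your reduction to bounding $d(G^{\mathrm{ab}})$ is correct, and from there you could simply observe $d(G^{\mathrm{ab}})\le d(G)$ and quote \cite{kr} as the paper does. Instead you attempt to reprove (a weak form of) the Kov\'acs--Robinson bound from scratch by induction, and this is where the genuine gap lies. You yourself flag the crux: for a minimal faithful $\rho=\bigoplus_i\chi_i$ you need $\bigcap_i(\Ker\chi_i)G'=G'$, and you only offer heuristics---twisting constituents by powers of a linear character $\lambda$, or passing to an isotypic block over a $p$-group---without verifying that either can actually be made to work. The twisting move is delicate: replacing $\chi_i$ by $\chi_i\lambda^j$ alters $\Ker\chi_i$ in ways that may break faithfulness of $\rho$ or reintroduce overlaps elsewhere in $G^{\mathrm{ab}}$, and there is no argument that the process terminates. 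The ``isotypic block over a $p$-group'' reduction is not explained at all. Your base case ($\rho$ irreducible) is also only a sketch: the claim $d(G^{\mathrm{ab}})=O(\log n)$ for an irreducible $G\le\GL(n,\mathbb{C})$ already requires nontrivial structure theory of linear groups, which is precisely the content of results like \cite{kr}. So the proposal is circular in spirit and incomplete in detail; the paper avoids all of this by invoking the generator bound directly.
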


This bound in Theorem F  depends on the classification of finite simple groups (this is the only result in the paper that relies on the CFSG). Without the CFSG, we can prove that $\rdim(G/N)\leq K n^2/\log n$ for some universal constant $K$.  Both versions of this result are straightforward consequences of known bounds on the number of generators of a linear group.
  
 As a consequence of Theorem E and \cite{rei}, which relies on a deep result in Mori theory \cite{bir}, we can obtain a new result on the essential dimension of an arbitrary finite group. It was asked in \cite{jly} whether  $\ed(G/N)\leq\ed(G)$ for any finite group $G$ and any $N\trianglelefteq G$. A negative answer to this question was given in Theorem 1.5 of \cite{mr}.  In fact, the example of  A. Meyer and Reichstein shows that we cannot hope for bounds better than exponential in Theorem E, even if we assume that $G$ is a $p$-group. We obtain the following bound for  $\ed(G/N)$ in terms of $\ed(G)$.
 
 \begin{corG}
 Let $G$ be a finite group and $N\trianglelefteq G$. Then $$\ed(G/N)\leq   \ed(G)h(\ed(G))j(\ed(G)h(\ed(G))),$$
 where $j$ is the bounding function in Jordan's theorem and $h$ is the bounding function in Birkar's Corollary 1.5 of \cite{bir}.
 \end{corG}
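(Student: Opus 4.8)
The plan is to obtain Corollary G by concatenating three facts, two of them the substantial results already cited in the paper and one of them elementary. The elementary fact is $\ed(H)\le\rdim(H)$, valid for every finite group $H$ (Proposition 4.15 of \cite{bf}); applied to $H=G/N$ it reduces the task to bounding $\rdim(G/N)$. The first substantial input is Theorem E, which gives $\rdim(G/N)\le n\,j(n)$ with $n=\rdim(G)$ and $j$ any bounding function in Jordan's theorem. The second substantial input is the main inequality of \cite{rei}, whose proof runs through Birkar's boundedness theorem \cite{bir} and in particular the bounding function $h$ of its Corollary 1.5, namely $\rdim(G)\le\ed(G)\,h(\ed(G))$ for every finite group $G$. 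I would take both of these as black boxes.

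With these in hand the argument is a short computation. First I would replace $j$ by a nondecreasing majorant, for instance $t\mapsto\max_{k\le t}j(k)$ (and, if needed, do the same for $h$); any such majorant is again an admissible bounding function, so this costs nothing, and it makes $t\mapsto t\,j(t)$ nondecreasing. Writing $m=\ed(G)$ and combining the two inequalities above,
$$\rdim(G/N)\le\rdim(G)\,j(\rdim(G))\le m\,h(m)\,j\bigl(m\,h(m)\bigr),$$
and hence $\ed(G/N)\le\rdim(G/N)\le\ed(G)\,h(\ed(G))\,j\bigl(\ed(G)\,h(\ed(G))\bigr)$, which is the assertion.

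I do not expect a genuine obstacle: the mathematical content of the statement lives entirely in Theorem E and in \cite{rei}, and the corollary is essentially a repackaging of them together with the inequality $\ed\le\rdim$. The only point that needs a word of care is the monotonicity bookkeeping just described, which is what lets one feed the bound $\rdim(G)\le\ed(G)\,h(\ed(G))$ into the right-hand side of Theorem E both as a leading factor and inside the Jordan function; since every quantity in sight is finite, the inequalities compose without incident.
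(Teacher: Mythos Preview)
Your argument is correct and matches the paper's proof essentially line for line: the paper also writes $n=\rdim(G)$, invokes $\ed(G/N)\le\rdim(G/N)$ from \cite{bf}, applies Theorem~E to get $\rdim(G/N)\le nj(n)$, and then uses \cite{rei} to bound $n\le\ed(G)h(\ed(G))$. The only difference is that you make explicit the monotonicity bookkeeping (passing to a nondecreasing majorant of $j$) that the paper leaves implicit in its phrase ``The result follows.''
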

 
 We close this Introduction with a remark on the style that we have used in this paper. It is a paper on character theory of finite groups that, we hope, will be of interest to other areas outside group theory, particularly to those areas where the essential dimension of a finite group plays a role. For this reason, we have decided to include some details in our proofs that we would not have included in a paper addressed exclusively to group theorists. 
  
 \section{$p$-groups}
 
 Our approach will be character-theoretic.  Our notation follows \cite{isa}.   We start with the proofs of Theorems C and D. 
 If $\chi$ is a character of a finite group then $\chi$ can be decomposed as a sum of irreducible characters, called the irreducible constituents, and it is easy to see that the kernel of $\chi$, $\Ker\chi$, is the intersection of the kernels of the irreducible constituents (Lemma 2.21 of \cite{isa}). We thus have the first part of  the following elementary result.
 
 

 \begin{lem}
 \label{dec}
 Let $G$ be a finite group. Then 
 $$
 \rdim(G)=\min\{\sum_{i=1}^s\chi_i(1)\mid\textrm{$s\in\mathbb{Z}^+$, $\chi_i\in\Irr(G)$ for every $i=1,\dots,s$, $\bigcap_{i=1}^s\Ker\chi_i=1$}\}.
 $$
 Furthermore, if $\chi_i,\dots,\chi_s\in\Irr(G)$ are such that $\rdim(G)=\sum_{i=1}^s\chi_i(1)$ and $\bigcap_{i=1}^s\Ker\chi_i=1$  then for every $i=1,\dots,s$, $\Soc(G)\not\leq\Ker\chi_i$.
 \end{lem}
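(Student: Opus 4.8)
The first assertion is immediate from the discussion preceding the statement. Any faithful complex representation of $G$ decomposes as a direct sum of irreducible constituents $\chi_1,\dots,\chi_s$ (we may list them without repetition, as repeating a constituent never lowers the degree), and its kernel is $\bigcap_{i=1}^s\Ker\chi_i$; thus the representation is faithful exactly when $\bigcap_{i=1}^s\Ker\chi_i=1$. Conversely, the direct sum of any family $\chi_1,\dots,\chi_s\in\Irr(G)$ with $\bigcap_{i=1}^s\Ker\chi_i=1$ is a faithful representation of degree $\sum_{i=1}^s\chi_i(1)$. Hence $\rdim(G)$ is the minimum of $\sum_{i=1}^s\chi_i(1)$ over all such families.

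For the ``furthermore'' part I would argue by contradiction. Suppose $\chi_1,\dots,\chi_s\in\Irr(G)$ satisfy $\bigcap_{i=1}^s\Ker\chi_i=1$ and $\sum_{i=1}^s\chi_i(1)=\rdim(G)$, but $\Soc(G)\leq\Ker\chi_j$ for some $j$; we may assume $G\neq 1$, as otherwise there is nothing to prove. If $s=1$ this forces $\Soc(G)\leq\Ker\chi_1=1$, hence $G=1$, a contradiction, so $s\geq 2$. Put $N=\bigcap_{i\neq j}\Ker\chi_i\trianglelefteq G$. I claim $N=1$. If not, then among the nontrivial normal subgroups of $G$ contained in $N$ choose one, $M$, of smallest order; $M$ is then a minimal normal subgroup of $G$, so $M\leq\Soc(G)\leq\Ker\chi_j$ by hypothesis, whence $M\leq N\cap\Ker\chi_j=\bigcap_{i=1}^s\Ker\chi_i=1$, a contradiction. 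Therefore $N=1$, i.e.\ the smaller family $\{\chi_i\}_{i\neq j}$ already has trivial intersection of kernels, so by the first part $\sum_{i\neq j}\chi_i(1)\geq\rdim(G)$; but $\sum_{i\neq j}\chi_i(1)=\rdim(G)-\chi_j(1)<\rdim(G)$, a contradiction. This proves that $\Soc(G)\not\leq\Ker\chi_i$ for every $i$.

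There is no serious obstacle here: the argument rests only on the standard fact that a nontrivial normal subgroup of a finite group contains a minimal normal subgroup and hence meets $\Soc(G)$ nontrivially, which is exactly what makes discarding a summand whose kernel contains $\Soc(G)$ harmless. The only points demanding a little care are the degenerate case $s=1$ and the trivial group, handled above.
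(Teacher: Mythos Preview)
Your proof is correct and follows essentially the same approach as the paper's. Both argue that if $\Soc(G)\leq\Ker\chi_j$ then $\bigcap_{i\neq j}\Ker\chi_i=1$, via the standard fact that a nontrivial normal subgroup meets $\Soc(G)$ nontrivially; the paper phrases this as $\Soc(G)\cap\bigl(\bigcap_{i\neq j}\Ker\chi_i\bigr)\leq\bigcap_i\Ker\chi_i=1$, while you spell out the minimal normal subgroup explicitly, but the content is identical.
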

 
 \begin{proof}
 It suffices to prove the second part. This follows from the fact that if, say, $\Soc(G)\leq\Ker\chi_1$, then
 $$
 \Soc(G)\cap\left(\bigcap_{i=2}^r\Ker\chi_i\right)\leq\bigcap_{i=1}^r\Ker\chi_i=1
 $$
 so 
 $\bigcap_{i=2}^r\Ker\chi_i=1$ and $\sum_{i=1}^r\chi_i(1)<\rdim(G)$, contradicting the first part.
 \end{proof}
 
 Note that if $G$ is a finite group and $1<N\trianglelefteq G$, then $N\cap\Soc(G)>1$. Thus if $\chi$ is a (not necessarily irreducible) character of $G$, then $\chi$ is faithful if and only if $\Ker\chi\cap\Soc(G)=1$, which happens if and only if $\chi_{\Soc(G)}$ is faithful.
 
 If $G$ is a $p$-group, then the minimal normal subgroups have order $p$, so they are central. Thus $\Soc(G)=\Omega_1(Z(G))$, where $\Omega_1(Z(G))$ is the subgroup generated by the central elements of order $p$. This group is elementary abelian and its rank coincides with the rank of $Z(G)$. Recall also that if $A$ is a finite  abelian group then $\hat{A}=\Irr(A)$ is a group isomorphic to $A$. We have the following.
 
 \begin{lem}
 \label{ab}
 Let $p$ be a prime and let $A$ be an elementary abelian $p$-group.  Let $\mu$ be a character of $A$. Then $\mu$ is faithful if and only if the irreducible constituents of $\mu$  form a generating set of $\Irr(A)$.
 \end{lem}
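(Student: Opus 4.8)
The plan is to reduce the statement to the nondegeneracy of the standard bilinear form on $\FF_p^k$. Since $A$ is abelian, every irreducible character of $A$ is linear and $\Irr(A)$ is a group under multiplication. Writing $\mu$ as the sum of its irreducible constituents $\lambda_1,\dots,\lambda_s\in\Irr(A)$, the first part of Lemma~\ref{dec} (equivalently Lemma~2.21 of \cite{isa}) gives $\Ker\mu=\bigcap_{i=1}^s\Ker\lambda_i$; in particular multiplicities play no role, and $\mu$ is faithful if and only if $\bigcap_{i=1}^s\Ker\lambda_i=1$. So it suffices to show that, for $\lambda_1,\dots,\lambda_s\in\Irr(A)$, one has $\bigcap_{i=1}^s\Ker\lambda_i=1$ if and only if $\langle\lambda_1,\dots,\lambda_s\rangle=\Irr(A)$.

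I would then make the elementary abelian structure explicit. Write $|A|=p^k$, fix an isomorphism $A\cong\FF_p^k$ of $\FF_p$-vector spaces, and fix a primitive $p$-th root of unity $\zeta\in\mathbb{C}$. Every linear character of $A$ has the form $\lambda_v\colon x\mapsto\zeta^{v\cdot x}$ for a unique $v\in\FF_p^k$, where $v\cdot x$ denotes the standard dot product, and $v\mapsto\lambda_v$ is both a group isomorphism and an $\FF_p$-linear isomorphism $\FF_p^k\to\Irr(A)$; since over $\FF_p$ a subgroup is automatically an $\FF_p$-subspace, a subset of $\FF_p^k$ spans the vector space exactly when the corresponding subset of $\Irr(A)$ generates the group. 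Under this identification $\Ker\lambda_v=\{x\in\FF_p^k:v\cdot x=0\}=v^{\perp}$, so setting $W=\operatorname{span}_{\FF_p}\{v_1,\dots,v_s\}$ we get $\bigcap_{i=1}^s\Ker\lambda_{v_i}=W^{\perp}$. Nondegeneracy of the dot product gives $\dim_{\FF_p}W^{\perp}=k-\dim_{\FF_p}W$, whence $W^{\perp}=0$ if and only if $W=\FF_p^k$, i.e.\ if and only if $\lambda_{v_1},\dots,\lambda_{v_s}$ generate $\Irr(A)$. This is exactly the equivalence we need.

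I do not expect a genuine obstacle: the only real content is the nondegeneracy of the standard $\FF_p$-bilinear form (equivalently, Pontryagin-type duality for finite abelian groups). The two points deserving a line of care are that the irreducible constituents of a character enter its kernel without multiplicity, so one may pass freely between $\mu$ and the set $\{\lambda_1,\dots,\lambda_s\}$, and that for vectors in $\FF_p^k$ the notions of spanning the vector space and generating the subgroup coincide, which is what lets the conclusion about $\operatorname{span}_{\FF_p}\{v_i\}$ be restated as a conclusion about $\langle\lambda_{v_i}\rangle\le\Irr(A)$.
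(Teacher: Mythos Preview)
Your proof is correct. Both your argument and the paper's rest on the same duality between $A$ and $\Irr(A)$, but the packaging differs. The paper argues by contraposition and cites Problem~2.7 of \cite{isa} (the order-reversing bijection $H\mapsto H^{\perp}$ between subgroups of $A$ and of $\Irr(A)$): if $\langle\lambda_1,\dots,\lambda_s\rangle<\Irr(A)$ is proper, its orthogonal $B\leq A$ is nontrivial and sits in every $\Ker\lambda_i$. You instead choose coordinates $A\cong\FF_p^k$, parametrize $\Irr(A)$ by the dot product, and read off $\bigcap_i\Ker\lambda_{v_i}=W^{\perp}$ with $W=\operatorname{span}_{\FF_p}\{v_i\}$, so that nondegeneracy gives the equivalence at once. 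Your route is more self-contained and makes the linear-algebra content explicit; the paper's is shorter and coordinate-free. The two small points you flag (irrelevance of multiplicities, and that subgroups of $\FF_p^k$ are subspaces) are handled correctly.
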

 
 \begin{proof}
 Let $\lambda_1,\dots,\lambda_s$ be the irreducible constituents of $\mu$. We know that $\mu$ is faithful if and only if $\bigcap_{i=1}^s\Ker\lambda_i=1$. 
 Assume that these irreducible constituent do not form a generating set of $\Irr(A)$. Then $\Irr(A)=\langle\lambda_1,\dots,\lambda_s\rangle\times I$ for some $I>1$. By Problem 2.7 of \cite{isa}, there exists $1<B\leq A$ such that $B\leq\Ker\lambda_i$ for every $i=1,\dots,s$. Thus $\mu$ is not faithful.  
 The converse is proved analogously.
 \end{proof}

 If $G$ is any group, $N\trianglelefteq G$ and $\lambda\in\Irr(N)$, then we set
 $$
 \Irr(G|\lambda)=\{\chi\in\Irr(G)\mid [\chi_N,\lambda]\neq0\}.
 $$
 If $\chi$ is any of the characters in $\Irr(G|\lambda)$, we say that $\chi$ lies over $\lambda$.
Now we are ready to prove the following useful result to compute the representation dimension of $p$-groups.

 \begin{cor}
 \label{pgrp}
 Let $p$ be a prime, let $G>1$ be a group of order $p^n$ and let $r$ be the rank of $Z(G)$.  Let $\chi$ be a faithful character of minimal degree of $G$. Then there exist $\chi_1,\dots,\chi_r\in\Irr(G)$ such that $\chi=\chi_1+\cdots+\chi_r$ and $(\chi_i)_{\Omega_1(Z(G))}=e_i\lambda_i$ for some linear characters $\{\lambda_1,\dots,\lambda_r\}\subseteq\Irr(\Omega_1(Z(G)))$ that form a minimal generating set of $\Irr(\Omega_1(Z(G)))$. In particular, 
$$
\rdim(G)=\min_{\{\lambda_1,\dots,\lambda_r\}}\min\{\sum_{i=1}^r\chi_i(1)\mid\textrm{$\chi_i\in\Irr(G|\lambda_i)$ for every $i=1,\dots,r$}\},
$$
where $\{\lambda_1,\dots,\lambda_r\}$ runs over the minimal generating sets of $\Irr(\Omega_1(Z(G)))$.
Furthermore, $$\rdim(G)\leq rp^{(n-r)/2}.$$
\end{cor}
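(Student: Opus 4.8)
The plan is to start from the given faithful character $\chi$ of minimal degree, decompose it into irreducible constituents, and push everything down to the central elementary abelian subgroup $\Omega_1(Z(G))=\Soc(G)$, where Lemma~\ref{ab} takes over. First I would note that $\chi$ is multiplicity-free: if some irreducible constituent occurred with multiplicity at least $2$, deleting one copy would leave a character with the same kernel but strictly smaller degree, contradicting minimality of $\chi(1)=\rdim(G)$. So write $\chi=\chi_1+\cdots+\chi_s$ with distinct $\chi_i\in\Irr(G)$. Then $\bigcap_{i=1}^s\Ker\chi_i=\Ker\chi=1$ and $\sum_{i=1}^s\chi_i(1)=\chi(1)=\rdim(G)$, so the tuple $(\chi_1,\dots,\chi_s)$ is a minimizer in Lemma~\ref{dec}, and therefore $\Soc(G)=\Omega_1(Z(G))\not\leq\Ker\chi_i$ for every $i$.

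Next I would exploit centrality of $\Omega_1(Z(G))$. Each $\chi_i$ being irreducible, its restriction to this central subgroup is a scalar multiple of a linear character, i.e. $(\chi_i)_{\Omega_1(Z(G))}=\chi_i(1)\lambda_i$ with $\lambda_i\in\Irr(\Omega_1(Z(G)))$, and $\lambda_i\neq 1$ precisely because $\Omega_1(Z(G))\not\leq\Ker\chi_i$. Since $\chi$ is faithful if and only if $\chi_{\Soc(G)}$ is (the remark after Lemma~\ref{dec}), the character $\chi_{\Omega_1(Z(G))}=\sum_{i=1}^s\chi_i(1)\lambda_i$ is faithful, so by Lemma~\ref{ab} the $\lambda_i$ generate $\Irr(\Omega_1(Z(G)))$, an $\FF_p$-space of rank $r$; hence $s\geq r$. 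Now some $r$-element subset of $\{\lambda_1,\dots,\lambda_s\}$ is a basis — say $\{\lambda_1,\dots,\lambda_r\}$ after relabelling — and by Lemma~\ref{ab} again $(\chi_1+\cdots+\chi_r)_{\Omega_1(Z(G))}=\sum_{i=1}^r\chi_i(1)\lambda_i$ is faithful, so $\chi_1+\cdots+\chi_r$ is a faithful character of $G$ and $\rdim(G)\leq\sum_{i=1}^r\chi_i(1)\leq\sum_{i=1}^s\chi_i(1)=\rdim(G)$; since each $\chi_i(1)\geq 1$, this forces $s=r$ and makes $\{\lambda_1,\dots,\lambda_r\}$ a minimal generating set of $\Irr(\Omega_1(Z(G)))$. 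This gives the first assertion (with $e_i=\chi_i(1)$) and the inequality ``$\geq$'' in the displayed formula.

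For ``$\leq$'' I would take an arbitrary minimal generating set $\{\lambda_1,\dots,\lambda_r\}$ of $\Irr(\Omega_1(Z(G)))$ and arbitrary $\chi_i\in\Irr(G|\lambda_i)$; centrality again gives $(\chi_i)_{\Omega_1(Z(G))}=\chi_i(1)\lambda_i$, so $(\chi_1+\cdots+\chi_r)_{\Omega_1(Z(G))}=\sum_{i=1}^r\chi_i(1)\lambda_i$ is faithful by Lemma~\ref{ab}, hence $\chi_1+\cdots+\chi_r$ is a faithful character of $G$ and $\rdim(G)\leq\sum_{i=1}^r\chi_i(1)$; minimizing over all such choices yields the formula. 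Finally, for the bound $\rdim(G)\leq rp^{(n-r)/2}$, I would fix a basis $\lambda_1,\dots,\lambda_r$ and estimate the degree of an arbitrary $\chi_i\in\Irr(G|\lambda_i)$: with $N_i=\Ker\lambda_i$, a central subgroup with $|\Omega_1(Z(G)):N_i|=p$, the character $\chi_i$ inflates from an irreducible character $\bar\chi_i$ of $G/N_i$ lying over the faithful linear character of the order-$p$ central subgroup $\bar Z_i=\Omega_1(Z(G))/N_i$; since $\bar Z_i\leq Z(\bar\chi_i)$, the standard inequality $\bar\chi_i(1)^2\leq|(G/N_i):Z(\bar\chi_i)|$ (see \cite{isa}) gives $\chi_i(1)^2=\bar\chi_i(1)^2\leq|(G/N_i):\bar Z_i|=|G:\Omega_1(Z(G))|=p^{n-r}$, so $\chi_i(1)\leq p^{(n-r)/2}$, and summing over $i$ gives $\rdim(G)\leq rp^{(n-r)/2}$. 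The only step requiring real care is the bookkeeping in the middle — extracting a basis from the constituent characters $\lambda_i$ and using minimality of $\chi(1)$ to pin down $s=r$; everything else is a mechanical combination of Lemmas~\ref{dec} and \ref{ab} with elementary facts about restrictions to central subgroups.
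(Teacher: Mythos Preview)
Your proof is correct and follows essentially the same route as the paper's: restrict $\chi$ to $\Omega_1(Z(G))=\Soc(G)$, invoke Lemma~\ref{ab} to get a generating set of linear characters, then use minimality of $\chi(1)$ to force the number of irreducible constituents to be exactly $r$. You are simply more explicit than the paper about the bookkeeping (multiplicity-freeness, extracting a basis and squeezing $s=r$, the two directions of the displayed formula, and the final degree bound), all of which the paper compresses into ``The result follows.''
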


\begin{proof}
 Since $\chi$ is faithful, then $\mu=\chi_{\Omega_1(Z(G))}$ is faithful.  By Lemma \ref{ab}, the irreducible constituents of $\mu$ form a generating set $\{\lambda_1,\dots,\lambda_r\}$ of $\Irr(\Omega_1(Z(G)))$. 

We know that for any $i$, $\chi$ has some irreducible constituent $\chi_i$ lying over $\lambda_i$ and that for any choice of these irreducible constituents $\chi_i$, $\chi_1+\cdots+\chi_r$ is faithful. Since $\chi(1)$ is the minimal degree of a faithful character of $G$, we conclude that $\chi=\chi_1+\cdots+\chi_r$ with $\chi_i\in\Irr(G|\lambda_i)$ of minimal degree among the characters in $\Irr(G|\lambda_i)$ for every $i$. The result follows.
\end{proof}

The last statement in the previous lemma was also pointed out in the Introduction of \cite{ckr}.  




As promised in the Introduction to this paper,  we will deduce Theorems C and D from a more general result that characterizes the $p$-groups with center of rank $r$ with $\rdim(G)=rp^{\lfloor(n-r)/2\rfloor}$. Since the result has some differences according as to whether $n-r$ is even or odd we have split the result in two parts. We start with the $n-r$ even case.

 \begin{thm}
 \label{even}
 Let $p$ be a prime, let $G>1$ be a group of order $p^n$ and let $r$ be the rank of $Z(G)$. Assume that $n-r$ is even. Then $\rdim(G)=rp^{(n-r)/2}$ if and only if $Z(G)$ is elementary abelian of order $p^r$ and all characters in $\Irr(G|Z(G))$ are fully ramified with respect to $G/Z(G)$. 
 \end{thm}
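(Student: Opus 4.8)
Everything runs through Corollary \ref{pgrp}, which identifies $\rdim(G)$ with the minimum of $\sum_{i=1}^{r}m(\lambda_i)$ taken over all minimal generating sets (equivalently, $\FF_p$-bases) $\{\lambda_1,\dots,\lambda_r\}$ of $\Irr(\Omega_1(Z(G)))$, where $m(\lambda)=\min\{\psi(1)\mid\psi\in\Irr(G|\lambda)\}$; the other ingredient is the elementary inequality $\psi(1)^2\le|G:Z(G)|$ for $\psi\in\Irr(G)$. Write $Z=Z(G)$ and $|Z|=p^z$, so that $z\ge r$, with equality exactly when $Z$ is elementary abelian, in which case $Z=\Omega_1(Z)$ and $|G:Z|=p^{n-r}$. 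I prove the two implications separately.

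For the ``if'' direction, assume $Z$ is elementary abelian of order $p^r$ and every character in $\Irr(G|Z)$ is fully ramified with respect to $G/Z$, hence of degree exactly $p^{(n-r)/2}$. Any basis $\{\lambda_1,\dots,\lambda_r\}$ of $\Irr(Z)$ consists of nonprincipal characters, so each $\psi\in\Irr(G|\lambda_i)$ lies in $\Irr(G|Z)$ and has $\psi(1)=p^{(n-r)/2}$; hence $\sum_i m(\lambda_i)=rp^{(n-r)/2}$ for every basis, and Corollary \ref{pgrp} yields $\rdim(G)=rp^{(n-r)/2}$.

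For the ``only if'' direction, assume $\rdim(G)=rp^{(n-r)/2}$. The first step is to show $z=r$. If $z\ge r+1$, take any basis $\{\lambda_1,\dots,\lambda_r\}$ of $\Irr(\Omega_1(Z))$ and, for each $i$, any $\psi_i\in\Irr(G|\lambda_i)$; then $\psi_i(1)^2\le|G:Z|=p^{n-z}\le p^{n-r-1}$, and since $\psi_i(1)$ is a power of $p$ and $n-r$ is even this forces $\psi_i(1)\le p^{(n-r)/2-1}$. Then $\rdim(G)\le\sum_i m(\lambda_i)\le rp^{(n-r)/2-1}<rp^{(n-r)/2}$, a contradiction; so $Z$ is elementary abelian of order $p^r$ and $\Omega_1(Z)=Z$. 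Now fix an arbitrary nonprincipal $\lambda\in\Irr(Z)$ and extend it to a basis $\{\lambda=\lambda_1,\dots,\lambda_r\}$ of $\Irr(Z)$. As each $\lambda_i$ is nonprincipal, the degree inequality gives $m(\lambda_i)\le p^{(n-r)/2}$, while Corollary \ref{pgrp} gives $\sum_i m(\lambda_i)\ge\rdim(G)=rp^{(n-r)/2}$; hence $m(\lambda_i)=p^{(n-r)/2}$ for all $i$, in particular $m(\lambda)=p^{(n-r)/2}$. Finally, from $\lambda^{G}(1)=|G:Z|$ one obtains $\sum_{\psi\in\Irr(G|\lambda)}\psi(1)^2=|G:Z|=p^{n-r}$; since the smallest degree appearing in this sum is already $p^{(n-r)/2}$, the sum must consist of a single term, so $\Irr(G|\lambda)$ is a single character of degree $p^{(n-r)/2}=|G:Z|^{1/2}$ — that is, $\lambda$, and every character of $G$ lying over it, is fully ramified with respect to $G/Z$. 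Since $\lambda$ was arbitrary and $\Irr(G|Z)=\bigcup_{1\ne\lambda\in\Irr(Z)}\Irr(G|\lambda)$, all of $\Irr(G|Z)$ consists of fully ramified characters.

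I do not expect a genuine obstacle: the real content is packaged in Corollary \ref{pgrp} and in the bound $\psi(1)^2\le|G:Z(G)|$. The two points that need a little care are the bookkeeping with exponents in the step ruling out a non-elementary-abelian center — which truly uses that $n-r$ is even and must be handled differently in the odd case (Theorem \ref{odd}) — and the equivalence ``$m(\lambda)=|G:Z|^{1/2}$'' $\Longleftrightarrow$ ``$\lambda$ is fully ramified in $G$'', which comes from the degree identity $\sum_{\psi\in\Irr(G|\lambda)}\psi(1)^2=|G:Z|$.
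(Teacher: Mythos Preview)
Your proof is correct and follows essentially the same approach as the paper: both directions run through Corollary~\ref{pgrp} together with the bound $\chi(1)^2\le|G:Z(G)|$. The only organizational difference is that you first dispose of the case $|Z(G)|>p^r$ directly (via the parity argument on exponents) and then conclude full ramification from $\sum_{\psi\in\Irr(G|\lambda)}\psi(1)^2=|G:Z|$, whereas the paper deduces that $Z(G)$ is elementary abelian at the end from the existence of a character of degree $p^{(n-r)/2}$; the content is the same, and if anything your ordering is slightly cleaner.
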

 
 \begin{proof}
 First, note that by Corollary 2.30 of \cite{isa}, the degree of any irreducible character of $G$ is at most $|G:Z(G)|^{1/2}\leq p^{(n-r)/2}$. Furthermore, if is easy to deduce that if $G$ has irreducible characters of degree $p^{(n-r)/2}$,  then $Z(G)$ is elementary abelian.

 Let $G$ be a $p$-group of order $p^n$, with center elementary abelian of order $p^r$ and all characters in $\Irr(G|Z(G))$ fully ramified with respect to $G/Z(G)$.  We know by Corollary \ref{pgrp} that  $$\rdim(G)=\min_{\{\lambda_1,\dots,\lambda_r\}}\min\{\sum_{i=1}^r\chi_i(1)\mid\textrm{$\chi_i\in\Irr(G|\lambda_i)$ for every $i=1,\dots,r$}\},$$
 where $\{\lambda_1,\dots,\lambda_r\}$ runs over the minimal generating sets of $\Irr(\Omega_1(Z(G)))$. Fix a set $\{\lambda_1,\dots,\lambda_r\}$ that attains the first minimum. 
  By hypothesis, for any $\chi_i\in\Irr(G|\lambda_i)$, $\chi_i(1)=|G:Z(G)|^{1/2}=p^{(n-r)/2}$.  We deduce that $\rdim(G)=rp^{(n-r)/2}$, as desired. 
 
 Conversely, assume that  $\rdim(G)=rp^{(n-r)/2}$. Assume, by way of contradiction, that there exists $\chi_1\in\Irr(G|Z(G))$ with $\chi_1(1)<p^{(n-r)/2}$. Let $\lambda_1\in\Irr(\Omega_1(Z(G)))$ lying under $\chi_1$. Now, prolong  $\{\lambda_1\}$ to a minimal generating set $\{\lambda_1,\dots,\lambda_r\}$ of $\Irr(\Omega_1(Z(G)))$ and choose $\chi_i\in\Irr(G)$ lying over $\lambda_i$ for every $i=2,\dots,r$. Note that $\chi_i(1)\leq p^{(n-r)/2}$. Set $\chi=\chi_1+\cdots+\chi_r$ and note that $\chi(1)<rp^{(n-r)/2}$. Furthermore, all the members of $\{\lambda_1,\dots,\lambda_r\}$ are irreducible constituents of $\mu=\chi_{\Omega_1(Z(G))}$. By Lemma \ref{ab}, $\mu$ is faithful. Hence, $\chi$ is faithful. This contradicts the hypothesis  $\rdim(G)=rp^{(n-r)/2}$. Therefore, the degree of any character in $\Irr(G|Z(G))$ is $p^{(n-r)/2}$. The result follows.

  
  The next proof is similar and we omit some details.
   
 \begin{thm}
 \label{odd}
 Let $p$ be a prime, let $G>1$ be a group of order $p^n$ and let $r$ be the rank of $Z(G)$. Assume that $n-r$ is odd. Then $\rdim(G)=rp^{(n-r-1)/2}$ if and only if all characters in $\Irr(G|\Omega_1(Z(G)))$ have degree $p^{(n-r-1)/2}$.  In this case,   $|Z(G):\Omega_1(Z(G))|\leq p$, i.e.,  $Z(G)$ is either elementary abelian or isomorphic to $C_{p^2}\times C_p\times\cdots\times C_p$. 
 \end{thm}

  \begin{proof}
  Again, we note first that the degree of any irreducible character of $G$ is at most $p^{(n-r-1)/2}$. Furthermore,  if $G$ has irreducible characters of degree $p^{(n-r-1)/2}$,  then 
  $|Z(G):\Omega_1(Z(G))|\leq p$.
  
  If  all characters in $\Irr(G|\Omega_1(Z(G)))$ have degree $p^{(n-r-1)/2}$, then it follows from Corollary  \ref{pgrp}  that $\rdim(G)=rp^{(n-r-1)/2}$ (because a faithful character of minimal degree is the sum of $r$ characters in $\Irr(G|\Omega_1(Z(G)))$). 
  
  Conversely, assume that  $\rdim(G)=rp^{(n-r-1)/2}$. As in the previous theorem, we can see that the degree of any character in $\Irr(G|Z(G))$ is $p^{(n-r-1)/2}$. The result follows.
  \end{proof}

  \end{proof}
  
  Now, we are ready to deduce Theorem D. It turns out that we just need the case $n-r$ even.
  
  \begin{proof}[Proof of Theorem D]
  Let $r$ be the rank of $Z(G)$.
  Assume first that $n$ is odd. As mentioned in the table in p. 638 of \cite{ckr}, $f_p(n)=\max_{s\in\mathbb{N}}(sp^{\lfloor(n-s)/2\rfloor})=p^{(n-1)/2}$ in this case. 
It is easy to see that this maximum is achieved only at $s=1$.
Since  $f_p(n)=\rdim(G)\leq rp^{\lfloor(n-r)/2\rfloor}$ we deduce that $r=1$. 
Thus $n-r=n-1$ is even and we are in the situation of Theorem \ref{even}.  We deduce that $Z(G)$ has order $p$ and all characters in $\Irr(G|Z(G))$ are fully ramified with respect to $G/Z(G)$. 

Now, suppose that $n$ is even. In this case,  $f_p(n)=\max_{s\in\mathbb{N}}(sp^{\lfloor(n-s)/2\rfloor})=2p^{(n-2)/2}$ and it is easy to see that this maximum is achieved only at $s=2$.
As in the $n$ odd case, we can see that $r=2$. 
 Thus $n-r=n-2$ is even and we are also in the situation of Theorem \ref{even}.  We conclude that $Z(G)$ is elementary abelian of order $p^2$ and all characters in $\Irr(G|Z(G))$ are fully ramified with respect to $G/Z(G)$. 
 \end{proof}
 
 Since the proof of Theorem C is analogous to  that of Theorem D, we omit some details.
 
 \begin{proof}[Proof of Theorem C]
Let $r$ be the rank of $Z(G)$. If $n$ is odd, then $f_2(n)=3p^{(n-3)/2}$ and the maximum is achieved only at $s=3$. We can see that $r=3$, so $n-r$ is even and the result follows from Theorem \ref{even}.  If $n$ is even, then $f_2(n)=2p^{(n-2)/2}$ and the maximum is achieved only at $s=2$ and $s=4$. We can see that $r=2$ or $r=4$, so $n-r$ is even and the result also follows from Theorem \ref{even}. 
  \end{proof}
  
  
      

\section{Arbitrary groups} 

  In this section, we prove Theorems A and B. 
 If a group $G$ has a faithful irreducible character $\chi$, then it follows from Corollary 2.7 of \cite{isa} that $\chi(1)<\sqrt{|G|}$, so Theorem A holds in this case. The problem of which finite groups have faithful irreducible characters is therefore relevant for our purposes. This problem has been studied since the beginning of the 20th century and there are several, perhaps not very well-known, characterizations of these groups. We refer the reader to Section 2 of \cite{sze} for a nice description of the history of this problem. 
 
 As in the $p$-group case, the socle of $G$ is very relevant in these characterizations.  Recall that $\Soc(G)=A(G)\times T(G)$, where $A(G)=A_1\times\cdots\times A_t$ is a direct product of {\it some} elementary abelian minimal normal subgroups of $G$ and $T(G)$ is the direct product of {\it all} the nonabelian minimal normal subgroups of $G$ (see Definition 42.6 and Lemma 42.9 of \cite{hup}).  In the remaining of this article, we will use the notation introduced in this paragraph without further explicit mention. In particular, $t=t(G)$ is the number of elementary abelian minimal normal subgroups of $G$ that appear in a decomposition of $A(G)$ as a direct product of minimal normal subgroups.

 We will  use the following consequence of Gasch\"utz's characterization (Theorem 42.7 of \cite{hup}) of finite groups with a faithful irreducible character.
 
 \begin{thm}
 \label{gas}
 If for every prime $p$ every simple $\GF(p)G$-module appears at most with multiplicity one in $A(G)$, then $G$ has a faithful irreducible character. 
  \end{thm}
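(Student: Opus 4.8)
The plan is to read Theorem~\ref{gas} off from Gasch\"utz's characterization of the finite groups admitting a faithful irreducible character, i.e.\ from Theorem 42.7 of \cite{hup}. Over $\mathbb{C}$ that characterization can be phrased as follows: $G$ has a faithful irreducible character if and only if $\Soc(G)$ is generated by a single conjugacy class of $G$; equivalently, if and only if for every prime $p$ the $p$-part of $A(G)$ is a cyclic $\GF(p)G$-module (the nonabelian part $T(G)$ never imposes a restriction). So the whole job is to check that the multiplicity-one hypothesis forces $\Soc(G)$ to be generated by one conjugacy class.

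For the abelian part I would argue module-theoretically. Fix a prime $p$ and let $V$ be the product of those $A_i$ in the decomposition $A(G)=A_1\times\cdots\times A_t$ that are $p$-groups. Each such $A_i$, being a minimal normal subgroup of $G$, is a simple $\GF(p)G$-module, so $V$ is semisimple; the hypothesis says precisely that $V$ is multiplicity free. In that case the only simple submodules of $V$ are the $A_i$ themselves, so every $\GF(p)G$-submodule of $V$ is the sum of a subfamily of the $A_i$. Consequently, if $x_p\in V$ has nontrivial component in each $A_i$, then the submodule $\GF(p)Gx_p$ it generates meets every homogeneous component and hence equals $V$; and $\GF(p)Gx_p=\langle x_p^{G}\rangle$, since both are just the span of the $G$-orbit of $x_p$ in the elementary abelian group $V$. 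Multiplying such elements over all primes produces $a\in A(G)$ with $\langle a^{G}\rangle=A(G)$.

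For the nonabelian part $T(G)=T_1\times\cdots\times T_s$, minimality already gives $\langle t_j^{G}\rangle=T_j$ for any $1\neq t_j\in T_j$. Taking $t=(t_1,\dots,t_s)$ with all $t_j\neq1$ and then $x=(a,t)$, the subgroup $\langle x^{G}\rangle$ is normal in $G$ and surjects onto $A(G)$ and onto every $T_j$; since each $T_j$ is perfect with trivial centre, a routine check (intersect $\langle x^{G}\rangle$ with each $T_j$, using $C_{T_j}(T_j)=1$) gives $T_j\le\langle x^{G}\rangle$ for all $j$, and then $\langle x^{G}\rangle=A(G)\times T(G)=\Soc(G)$. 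Gasch\"utz's theorem now delivers a faithful irreducible character of $G$.

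The step I expect to need the most care is simply interfacing the argument with the exact wording of Theorem 42.7 of \cite{hup} — in particular verifying that the nonabelian minimal normal subgroups collected in $T(G)$ genuinely contribute no extra condition — rather than anything substantial; the submodule bookkeeping for the abelian part is routine. It is also worth recording that the converse of Theorem~\ref{gas} fails: a non-absolutely-irreducible simple $\GF(p)G$-module can occur in $A(G)$ with multiplicity two and still leave $\Soc(G)$ generated by a single conjugacy class, so the stated hypothesis is sufficient but not necessary.
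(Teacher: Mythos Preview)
Your plan is sound: the multiplicity-free hypothesis on $A(G)$ does force every $\GF(p)G$-submodule of the $p$-part $V$ to be a partial sum of the $A_i$, so an element with nonzero component in each $A_i$ generates $V$ as a $G$-module; the product over primes then generates $A(G)$, and the nonabelian factors are handled exactly as you describe (the key point $[N,T_j]\le N\cap T_j$ together with $Z(T_j)=1$ forces $T_j\le N$). Gasch\"utz's criterion then applies.

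Compared with the paper, however, you are doing much more work than is asked for. The paper's proof is a single citation: the statement is precisely Theorem~42.12(a) of \cite{hup}, which Huppert records as a ready-made corollary of the Gasch\"utz characterization. Your proposal instead re-derives 42.12(a) from 42.7 by hand. That is a legitimate, more self-contained route --- and your closing remark that the converse fails is a nice addendum --- but it is not the paper's approach, which simply invokes the result off the shelf.
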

 
 \begin{proof}
 This is Theorem 42.12(a) of \cite{hup}. 
 \end{proof}

 \begin{lem}
 \label{t0}
 Let $G$ be a finite group. Then $T(G)$ has a faithful irreducible character. In particular, if $t=0$ (or, equivalently, if $G$ does not have any nontrivial abelian normal subgroup) then $G$ has a faithful irreducible character and $\rdim(G)<\sqrt{|G|}$. 
 \end{lem}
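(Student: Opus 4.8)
The plan is to establish the two assertions in turn, the statement about $T(G)$ being the one that does the work.

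For $T(G)$, I would begin by unwinding its structure. Distinct minimal normal subgroups of $G$ intersect trivially and therefore centralize each other, and each \emph{nonabelian} minimal normal subgroup is characteristically simple, hence a direct product of (necessarily nonabelian) isomorphic simple groups. Consequently $T(G)$ is, as an abstract group, a direct product $S_1\times\cdots\times S_m$ of nonabelian simple groups. For each $i$, choose a nonprincipal $\chi_i\in\Irr(S_i)$. Since $\Ker\chi_i\trianglelefteq S_i$ and $S_i$ is simple, $\chi_i$ is faithful; since moreover $Z(S_i)=1$, Lemma 2.27 of \cite{isa} gives $Z(\chi_i)=1$, so the only $s\in S_i$ with $|\chi_i(s)|=\chi_i(1)$ is $s=1$. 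Now let $\chi=\chi_1\times\cdots\times\chi_m\in\Irr(T(G))$ be the external product. If $(s_1,\dots,s_m)\in\Ker\chi$, then $\prod_i\chi_i(s_i)=\prod_i\chi_i(1)$; taking absolute values and using $|\chi_i(s_i)|\le\chi_i(1)$ for each $i$ forces $|\chi_i(s_i)|=\chi_i(1)$, hence $s_i=1$, for all $i$. Thus $\chi$ is a faithful irreducible character of $T(G)$.

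For the ``in particular'' clause, I would first record that $t=0$ is equivalent to $G$ having no nontrivial abelian normal subgroup: a nontrivial abelian normal subgroup of $G$ contains a minimal normal subgroup of $G$, which is then an abelian minimal normal subgroup and so makes $t\ge 1$; conversely each $A_i$ is a nontrivial abelian normal subgroup. When $t=0$ we have $A(G)=1$, so the hypothesis of Theorem \ref{gas} (no simple $\GF(p)G$-module occurs in the trivial module $A(G)$ at all) is vacuously satisfied, and hence $G$ has a faithful irreducible character $\chi$. By the observation at the beginning of this section, $\chi(1)<\sqrt{|G|}$, and since a representation affording $\chi$ gives an embedding $G\hookrightarrow\GL(\chi(1),\mathbb{C})$, we conclude $\rdim(G)\le\chi(1)<\sqrt{|G|}$.

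There is no serious obstacle here; the one point that is easy to get wrong is the faithfulness of the external product $\chi_1\times\cdots\times\chi_m$, since in general the kernel of an external product of characters strictly contains the product of the kernels (already for two copies of the sign character of $C_2$). What makes the argument go through is that for a nonabelian simple group the \emph{center} of an irreducible character, and not merely its kernel, is trivial.
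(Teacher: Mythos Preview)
Your argument is correct. For the first assertion---that $T(G)$ has a faithful irreducible character---you and the paper do the same thing: the paper cites Problem~4.3 of \cite{isa}, while you spell out that argument explicitly via $Z(\chi_i)=1$. Your closing remark about why the external product is faithful (it is the triviality of the \emph{centers} $Z(\chi_i)$, not merely of the kernels, that matters) is exactly the content of that exercise.

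For the ``in particular'' clause the two proofs diverge. The paper stays self-contained: when $t=0$ one has $\Soc(G)=T(G)$, so it takes any $\chi\in\Irr(G)$ lying over the faithful $\varphi\in\Irr(T(G))$ just constructed, observes that $\chi_{T(G)}$ is a sum of $G$-conjugates of $\varphi$ and hence faithful, and concludes $\Ker\chi\cap\Soc(G)=1$, whence $\chi$ is faithful. You instead invoke Theorem~\ref{gas} (Gasch\"utz) vacuously, since $A(G)=1$. Both work; the paper's route is more elementary and reuses the character $\varphi$ already in hand, while yours is shorter but leans on a result that is considerably deeper than what is needed here. Either way the final step $\rdim(G)\le\chi(1)<\sqrt{|G|}$ is the same, via Corollary~2.7 of \cite{isa}.
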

 
 \begin{proof}
 Note that $T(G)$ is a direct product of nonabelian simple groups. By Problem 4.3 of \cite{isa}, for instance, the product $\varphi$ of nonprincipal characters of each of the factors is a faithful irreducible character of $T(G)$. 
 
 If $\chi\in\Irr(G)$ lies over $\varphi$ then $\chi_{T(G)}$ is a sum of conjugates of $\varphi$, so by Lemma 2.21 of \cite{isa}, $\chi_{T(G)}$ is faithful. This implies that $$1=\Ker\chi\cap T(G)=\Ker\chi\cap\Soc(G),$$ so $\chi$ is a faithful irreducible character of $G$. Now, $\rdim(G)\leq\chi(1)<\sqrt{|G|}$, by Corollary 2.7 of \cite{isa}.
 \end{proof} 
 
 \begin{lem}
 \label{min}
 Let $G$ be a finite group without nonabelian minimal normal subgroups. Let $\chi$ be a faithful character of $G$ with $\rdim(G)=\chi(1)$. Then for every $\psi\in\Irr(G)$ irreducible constituent of $\chi$, $\Soc(G)\not\leq\Ker\psi$.
  \end{lem}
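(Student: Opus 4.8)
The plan is to reduce the statement to the second part of Lemma~\ref{dec}, using the minimality of $\chi(1)$ to force every irreducible constituent of $\chi$ to occur with multiplicity one.

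First I would write $\chi = m_1\psi_1 + \cdots + m_k\psi_k$, where $\psi_1,\dots,\psi_k\in\Irr(G)$ are the distinct irreducible constituents of $\chi$ and the $m_i$ are positive integers. Since $\chi$ is faithful, Lemma~2.21 of \cite{isa} gives $\bigcap_{i=1}^k\Ker\psi_i=\Ker\chi=1$, so the family $\psi_1,\dots,\psi_k$ is one of those competing in the minimum of Lemma~\ref{dec}, whence $\rdim(G)\leq\sum_{i=1}^k\psi_i(1)$. Combining this with
$$
\sum_{i=1}^k\psi_i(1)\leq\sum_{i=1}^k m_i\psi_i(1)=\chi(1)=\rdim(G),
$$
I would conclude that all of these inequalities are equalities; in particular $\sum_{i=1}^k(m_i-1)\psi_i(1)=0$, which forces $m_i=1$ for every $i$. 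Hence $\chi=\psi_1+\cdots+\psi_k$ with $\rdim(G)=\sum_{i=1}^k\psi_i(1)$ and $\bigcap_{i=1}^k\Ker\psi_i=1$.

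At this point the second part of Lemma~\ref{dec}, applied to $\psi_1,\dots,\psi_k$, yields $\Soc(G)\not\leq\Ker\psi_i$ for every $i$, and since these are precisely the irreducible constituents of $\chi$ this is exactly the asserted conclusion.

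There is essentially no obstacle here: the whole content is the remark that a faithful character of minimal degree cannot repeat an irreducible constituent, since deleting one copy of a repeated constituent would leave a faithful character of strictly smaller degree. I would also observe that the hypothesis that $G$ has no nonabelian minimal normal subgroups is not actually used in this argument; it is stated because that is the setting in which the lemma is applied, namely when $\Soc(G)=A(G)$ is abelian.
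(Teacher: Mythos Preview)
Your proof is correct and follows essentially the same idea as the paper's: both arguments exploit the minimality of $\chi(1)$ to rule out an irreducible constituent whose kernel contains $\Soc(G)$. The only cosmetic difference is that you first force all multiplicities to be $1$ and then invoke the second part of Lemma~\ref{dec}, whereas the paper argues directly by subtracting one copy of the offending $\psi$ and re-derives the contradiction (in effect re-proving Lemma~\ref{dec} in situ). Your observation that the hypothesis ``no nonabelian minimal normal subgroups'' is not actually used is also correct; in the paper's proof the equality $\Soc(G)=A(G)$ is recorded but plays no role in the argument.
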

 
 \begin{proof}
 Assume not. Then $\Soc(G)=A(G)\leq\Ker\psi$ for some $\psi\in\Irr(G)$ irreducible constituent of $\chi$. Consider $\Delta=\chi-\psi$. Since $\psi$ is an irreducible constituent of $\chi$, $\Delta$ is a character of $G$ and $\Delta(1)<\chi(1)=\rdim(G)$. Thus $\Delta$ is not faithful. Let $\psi_1,\dots,\psi_s$ be the remaining irreducible constituents of $\chi$. Since $\Delta$ is not faithful, the intersection of the kernels of the $\psi_i$'s is not trivial. But since $\Ker\psi$ contains the whole socle, we deduce that the intersection of  the kernels of all the irreducible constituents of  $\chi$ is not trivial. This contradicts the hypothesis that $\chi$ is faithful.
 \end{proof}
 
 We write $d(G)$ to denote the rank of a group $G$. Recall that it is the minimal number of generators of $G$.
 The following result is well-known.

 \begin{lem}
 \label{abe}
 Let $G$ be a finite abelian group. Then $d(G)=\rdim(G)$.
 \end{lem}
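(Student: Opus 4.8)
The plan is to prove $\rdim(G)=d(G)$ by showing each side is at most the other, using in both directions the duality between a finite abelian group and its character group. Since $G$ is abelian, every irreducible character of $G$ is linear, so Lemma~\ref{dec} says precisely that $\rdim(G)$ is the least integer $s$ for which there exist linear characters $\lambda_1,\dots,\lambda_s\in\Irr(G)$ with $\bigcap_{i=1}^s\Ker\lambda_i=1$.

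The crux is the following extension of Lemma~\ref{ab} to an arbitrary finite abelian group $G$: for $\lambda_1,\dots,\lambda_s\in\Irr(G)$ we have $\bigcap_{i=1}^s\Ker\lambda_i=1$ if and only if $\langle\lambda_1,\dots,\lambda_s\rangle=\Irr(G)$. To see this, put $H=\langle\lambda_1,\dots,\lambda_s\rangle\le\Irr(G)$ and $B=\bigcap_{\chi\in H}\Ker\chi$; the standard annihilator identity gives $|B|\,|H|=|G|$, and since each $\lambda_i$ already vanishes on $B$ we get $B=\bigcap_{i=1}^s\Ker\lambda_i$. Hence this common kernel is trivial exactly when $|H|=|G|=|\Irr(G)|$, i.e.\ when $H=\Irr(G)$. (Alternatively one can argue verbatim as in Lemma~\ref{ab} using Problem~2.7 of \cite{isa}.) Combining this with the reformulation in the first paragraph, $\rdim(G)$ equals the minimal number of elements needed to generate the group $\Irr(G)$, that is $\rdim(G)=d(\Irr(G))$; and since $\Irr(G)\cong G$ as abstract groups, $d(\Irr(G))=d(G)$, which is the claim.

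I do not anticipate a real obstacle: the only step needing care is the annihilator identity $|B|\,|H|=|G|$, which is classical and is already implicit in Lemma~\ref{ab}, so in the write-up I would either cite it or include the one-line induction. If one wishes to bypass character-group duality, both inequalities also follow directly from the invariant-factor decomposition $G\cong C_{n_1}\times\cdots\times C_{n_d}$ with $d=d(G)$: for $\rdim(G)\le d(G)$, take $\lambda_i$ to be a faithful linear character of the $i$-th factor inflated to $G$, so that $\lambda_1+\cdots+\lambda_d$ is faithful; for $\rdim(G)\ge d(G)$, note that a faithful representation of dimension $m$ embeds $G$ into a product of the cyclic groups $G/\Ker\lambda_i\hookrightarrow\mathbb{C}^{\times}$, and a subgroup of a product of $m$ cyclic groups can be generated by $m$ elements (lift to a free abelian group of rank $m$), so $d(G)\le m$.
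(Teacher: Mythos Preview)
Your proposal is correct. Your main argument, via the identification $\rdim(G)=d(\Irr(G))$ using the annihilator correspondence, is a genuinely different route from the paper's. The paper does not invoke the duality $\Irr(G)\cong G$ at all; instead it proves the two inequalities separately in exactly the way you sketch as your ``alternative'' at the end: write $G=C_1\times\cdots\times C_m$ with $m=d(G)$ and exhibit $m$ linear characters whose kernels intersect trivially (giving $\rdim(G)\le d(G)$), and conversely take a minimal faithful character, observe that each irreducible constituent has cyclic image, and embed $G$ into the product of these cyclic quotients to get $d(G)\le\rdim(G)$.

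What the two approaches buy: your duality argument is more conceptual and explains in one stroke why the answer is $d(G)$, at the cost of needing the annihilator identity $|B|\,|H|=|G|$ (which, as you note, is essentially Problem~2.7 of \cite{isa}). The paper's argument is entirely self-contained from the structure theorem and avoids that identity, but is slightly longer. One small wording point in your write-up: the sentence ``since each $\lambda_i$ already vanishes on $B$ we get $B=\bigcap_{i=1}^s\Ker\lambda_i$'' only states the inclusion $B\subseteq\bigcap_i\Ker\lambda_i$; the reverse inclusion (that any element killed by all the $\lambda_i$ is killed by every word in them) is the substantive direction, so you should say that explicitly.
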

 
 \begin{proof}
 Set $d(G)=m$.
 By the fundamental theorem of abelian groups, $G=C_1\times\cdots\times C_m$ is a direct product of $m$ cyclic groups $C_i=\langle x_i\rangle$. Let  $\lambda_i$ be a generator of $\Irr(C_i)$  and let $\mu_i\in\Irr(G)$ be the linear character determined by means of $\mu_i(x_i)=\varepsilon$, where $\varepsilon$ is an $o(x_i)$th primitive root of unity, and $\mu_i(x_j)=1$ for $j\neq i$. Notice that $\Ker\mu_i=C_1\cdots C_{i-1}C_{i+1}\cdots C_m$.  Put $\mu=\mu_1+\cdots+\mu_m$. By Lemma 2.21 of \cite{isa}, $\Ker\mu=1$, i.e., $\mu$ is faithful. Since $\mu(1)=m$ we deduce $\rdim(G)\leq\mu(1)=m=d(G)$. 
 
 Conversely, let $\chi$ be any faithful character of $G$. Decompose $\chi=a_1\chi_1+\cdots+a_s\chi_s$ as a sum of irreducible (linear)  characters $\chi_i$. Since $G/\Ker\chi_i$ is cyclic for every $i$ and the intersection of the kernels of the characters $\chi_i$ is trivial, we deduce that $G$ is isomorphic to a subgroup of the direct product of the cyclic groups $G/\Ker\chi_i$. Write $\Gamma$ to denote this group. Since $d(\Gamma)=s$ and $\Gamma$ is abelian, we deduce that $d(G)\leq d(\Gamma)=s\leq\chi(1)\leq \rdim(G)$. The result follows.
 \end{proof}

 The next result, in conjunction with Lemma \ref{dec}, lies at the core of our proof of Theorems A and B.

 \begin{lem}
 \label{key}
 Let $G$ be a finite group. Assume that $t>0$.  For $i=1,\dots, t$, write $B_i=A_1\times\cdots\times A_{i-1}\times A_{i+1}\times\cdots\times A_t$. Then 
 \begin{enumerate}
 \item
 For every $i=1,\dots,t$, there exists $\chi_i\in\Irr(G)$ such that $\Ker\chi_i\cap\Soc(G)=B_i$. Furthermore, 
 $$
 \chi_i(1)\leq|G/B_i:Z(G/B_i)|^{1/2}.
 $$
 \item
 We have $\bigcap_{i=1}^t\Ker\chi_i=1$. In particular, if $\chi=\chi_1+\cdots+\chi_t$, then $\rdim(G)\leq\chi(1)$. 
 \end{enumerate}
 \end{lem}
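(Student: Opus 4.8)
I would prove the two parts in order, with (ii) falling out quickly from (i). Fix $i\in\{1,\dots,t\}$ and put $H=G/B_i$. From the socle decomposition, $A(G)=A_1\times\cdots\times A_t=A_i\times B_i$, so $B_i\trianglelefteq G$ and $\Soc(G)=A_i\times B_i\times T(G)$; in particular $A_i$ and $T(G)$ intersect $B_i$ trivially and hence inject into $H$, and $S:=\Soc(G)/B_i$ is the internal direct product inside $H$ of (copies of) $A_i$ and $T(G)$, with $S\trianglelefteq H$. The point to record first is that the copy of $A_i$ is a minimal normal subgroup of $H$: any normal subgroup of $H$ contained in it pulls back to a normal subgroup of $G$ between $B_i$ and $A_iB_i$, hence to a $G$-submodule of the simple module $A_i$.

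The obvious attempt --- inflate to $G$ a faithful irreducible character of $H$ --- does not work, since $H=G/B_i$ need not possess a faithful irreducible character (for instance, for $G=C_4\times C_2$ and a suitable minimal normal subgroup $B_i$ of order $2$ one has $H\cong C_2\times C_2$). This is the genuinely delicate point of the lemma, and the way around it is that we only need an irreducible character of $H$ that is faithful \emph{on $S$}. To produce one I would use Lemma \ref{t0} to fix a faithful irreducible character $\varphi$ of $T(G)$, choose any nontrivial linear character $\lambda$ of the nontrivial abelian group $A_i$, and set $\nu=\lambda\times\varphi\in\Irr(S)$; then $\Ker\nu=\Ker\lambda$ lies inside $A_i$ and does not contain $A_i$. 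Picking $\psi\in\Irr(H)$ lying over $\nu$, Clifford's theorem gives $\psi_S=e\sum_h\nu^h$, the sum over the $H$-orbit of $\nu$, so $\Ker\psi\cap S=\Ker(\psi_S)=\bigcap_{h\in H}(\Ker\nu)^h$. The latter is a normal subgroup of $H$ contained in $\Ker\nu\subseteq A_i$, so by the $H$-simplicity of $A_i$ and the fact that $A_i\not\le\Ker\nu$ it is trivial. Letting $\chi_i\in\Irr(G)$ be the inflation of $\psi$, the kernel of $\chi_i$ is the preimage of $\Ker\psi$, whence $\Ker\chi_i\cap\Soc(G)$ is the preimage of $\Ker\psi\cap S=1$, namely $B_i$. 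The degree bound is then immediate: $\chi_i(1)=\psi(1)\le|H:Z(H)|^{1/2}=|G/B_i:Z(G/B_i)|^{1/2}$ by Corollary 2.30 of \cite{isa}.

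For (ii), set $K=\bigcap_{i=1}^t\Ker\chi_i\trianglelefteq G$. By (i), $K\cap\Soc(G)=\bigcap_i(\Ker\chi_i\cap\Soc(G))=\bigcap_i B_i$, and inside the direct product $A_1\times\cdots\times A_t$ one has $\bigcap_i B_i=\bigcap_i\prod_{j\ne i}A_j=1$. Since a nontrivial normal subgroup of $G$ meets $\Soc(G)$ nontrivially, $K=1$, so $\chi=\chi_1+\cdots+\chi_t$ is faithful by Lemma 2.21 of \cite{isa}, and therefore $\rdim(G)\le\chi(1)=\sum_{i=1}^t\chi_i(1)$. The only step that is not routine bookkeeping with the socle is the construction of $\psi$ in (i): recognizing that faithfulness need only be demanded on $\Soc(G)/B_i$, and that it can be extracted from Clifford's theorem together with the $H$-simplicity of $A_i$.
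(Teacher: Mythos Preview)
Your argument is correct and follows essentially the same route as the paper. The paper works directly in $G$: it sets $\mu_i=\lambda_i\times 1_{B_i}\times\varphi\in\Irr(\Soc(G))$, picks $\chi_i\in\Irr(G)$ over $\mu_i$, and uses $\Ker\chi_i\cap\Soc(G)\le\core_G(\Ker\mu_i)=B_i$ together with $B_i\le\Ker\chi_i$; your passage to $H=G/B_i$ and verification that $A_i$ is $H$-simple is exactly the quotient-side phrasing of that same $\core$ computation, and the Clifford/degree steps coincide.
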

 
 \begin{proof}
  Let $\lambda_i\in\Irr(A_i)$ be nonprincipal for $i=1,\dots,t$ and $\varphi\in\Irr(T(G))$ be faithful. Recall that $A(G)=A_i\times B_i$.  Put
  $$\mu_i=\lambda_i\times1_{B_i}\times\varphi\in\Irr(Soc(G)).$$

    Let $\chi_i\in\Irr(G)$ lying over $\mu_i$. Since $\mu_i$ is an irreducible constituent of $(\chi_i)_{\Soc(G)}$,  $$\Ker\chi_i\cap \Soc(G)\leq\core_G(\Ker\mu_i)=B_i.$$ 
 Since $(\mu_i)_{B_i}$ is a multiple of the principal character, we clearly have that $B_i\leq\Ker\chi_i$. The first claim of part (i) follows. The second claim holds by Corollary 2.30 of \cite{isa}.
 
 By the definition of the subgroups $B_i$, their intersection is trivial. Thus 
 $$
 1=\bigcap_{i=1}^t(\Ker\chi_i\cap\Soc(G))=\left(\bigcap_{i=1}^t\Ker\chi_i\right)\cap\Soc(G).
 $$
 Since $\bigcap_{i=1}^t\Ker\chi_i$ is a normal subgroup of $G$, we deduce that it has to be the trivial subgroup. The inequality $\rdim(G)\leq\chi(1)$ follows from Lemma \ref{dec}.
 \end{proof}

      


    Now, we can obtain our first approximatiom to Theorem A when $t>0$.  In the remaining results in this section, we will also use the notation from Lemma \ref{key}. In particular, the characters $\chi_i$ and $\lambda_i$ will be the characters that have appeared in the statement of Lemma \ref{key} and its proof.

  \begin{lem}
  \label{gen2}
  Let $G$ be a finite group.  Assume that $t\geq1$. Write $|A_i|=a_i$ for every $i=1,\dots,t$. Then
  $$
  \rdim(G)<\sqrt{|G|}\left(\sum_{j=1}^t\prod_{k\neq j}\frac{1}{\sqrt{a_k}}\right).
  $$
  \end{lem}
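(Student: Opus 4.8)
The plan is to read the inequality straight off Lemma \ref{key}. That lemma produces $\chi_1,\dots,\chi_t\in\Irr(G)$ with $\Ker\chi_i\cap\Soc(G)=B_i$ and $B_i\le\Ker\chi_i$, and shows that $\chi=\chi_1+\cdots+\chi_t$ is faithful, so that $\rdim(G)\le\chi(1)=\sum_{i=1}^t\chi_i(1)$ by Lemma \ref{dec}. Since $|B_i|=\prod_{k\ne i}a_k$, the target bound is exactly $\sum_{i=1}^t\chi_i(1)<\sum_{i=1}^t\sqrt{|G:B_i|}$, so it suffices to prove the strict bound $\chi_i(1)<\sqrt{|G:B_i|}$ for each individual $i$.

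To do that I would argue as follows. First, $G/\Ker\chi_i$ is nontrivial: otherwise $\Soc(G)=\Ker\chi_i\cap\Soc(G)=B_i$, contradicting the fact that the minimal normal subgroup $A_i$ is nontrivial and not contained in $B_i$. Second, $B_i\le\Ker\chi_i$ gives $|G:\Ker\chi_i|\le|G:B_i|$. Third, $\chi_i$ factors through $G/\Ker\chi_i$ as a faithful irreducible character of that nontrivial group, so by Corollary 2.7 of \cite{isa} (the estimate already invoked in this section) we get $\chi_i(1)<\sqrt{|G:\Ker\chi_i|}\le\sqrt{|G:B_i|}=\sqrt{|G|}\prod_{k\ne i}\frac{1}{\sqrt{a_k}}$. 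Summing over $i=1,\dots,t$,
$$\rdim(G)\le\sum_{i=1}^t\chi_i(1)<\sum_{i=1}^t\sqrt{|G|}\prod_{k\ne i}\frac{1}{\sqrt{a_k}}=\sqrt{|G|}\left(\sum_{j=1}^t\prod_{k\ne j}\frac{1}{\sqrt{a_k}}\right),$$
as desired.

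There is essentially no obstacle beyond bookkeeping; the only point that requires a moment's thought is the \emph{strictness} of $\chi_i(1)<\sqrt{|G:B_i|}$. This does not follow from the weak estimate $\chi_i(1)^2\le|G/B_i:Z(G/B_i)|$ used in Lemma \ref{key}, since $Z(G/B_i)$ can be trivial; but it does follow from the fact that $\chi_i$ is a faithful irreducible character of the nontrivial quotient $G/\Ker\chi_i$ (equivalently, from $\chi_i(1)^2+1\le\sum_{\psi\in\Irr(G/\Ker\chi_i)}\psi(1)^2=|G/\Ker\chi_i|$). The $t=1$ case is the degenerate instance where $B_1=1$, $\chi_1$ is itself a faithful irreducible character of $G$, and the statement reduces to $\rdim(G)<\sqrt{|G|}$.
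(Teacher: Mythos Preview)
Your proof is correct and follows the same route as the paper: bound each $\chi_i(1)$ strictly by $\sqrt{|G:B_i|}$ and sum. The only cosmetic difference is that the paper applies the degree bound directly in $G/B_i$ (any irreducible character of a nontrivial group $H$ satisfies $\chi(1)^2\le |H|-1$), whereas you pass further to $G/\Ker\chi_i$ to invoke Corollary~2.7 of \cite{isa} for a faithful irreducible character; both yield the same strict inequality.
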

  
  \begin{proof}
  Since $\chi_i\in\Irr(G/B_i)$,
we note that 
        $$\chi_i(1)<\sqrt{\frac{|G|}{a_1\cdots a_{i-1}a_{i+1}\cdots a_t}},$$
        (It suffices to observe that  $|B_i|=a_1\cdots a_{i-1}a_{i+1}\cdots a_t$.)

     Hence, if $\chi=\chi_1+\cdots+\chi_t$, 
     $$
    \rdim(G)\leq \chi(1)<\sum_{i=1}^t\sqrt\frac{|G|}{a_1\cdots a_{i-1}a_{i+1}\cdots a_t}=\sqrt{|G|}\left(\sum_{j=1}^t\prod_{k\neq j}\frac{1}{\sqrt{a_k}}\right),
     $$
     as desired
     \end{proof}
     
     Lemma \ref{gen2} implies that Theorem A holds when   $\sum_{j=1}^t\prod_{k\neq j}\frac{1}{\sqrt{a_k}}<1$. In the next elementary lemma we see that this is the case most of the times.
     
           \begin{lem}
     \label{cal}
     Let $t\geq2$ be an integer and let $a_1\geq\cdots\geq a_t\geq 2$ be $t$ integers. 
    If 
    $$
    \sum_{j=1}^t\prod_{k\neq j}\frac{1}{\sqrt{a_k}}\geq1
    $$ 
    then one of the following holds:
    \begin{enumerate}
    \item $t=2$ and $(a_1,a_2)\in\{(x,2), (y,3)\mid 2\leq x\leq11, 3\leq y\leq5\}$.
    \item $t=3$ and $(a_1,a_2,a_3)\in\{(x,2,2), (4,3,2), (3,3,2)\mid 2\leq x\leq 7\}$.
     \item $t=4$ and $(a_1,a_2,a_3,a_4)\in\{(x,2,2,2), (3,3,2,2)\mid 2\leq x\leq5\}$.
    \item $t=5$ and $(a_1,a_2,a_3,a_4,a_5)\in\{(2,2,2,2,2), (3,2,2,2,2)\}$.
    \end{enumerate}
    \end{lem}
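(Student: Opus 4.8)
The statement is a finite arithmetic fact, and the plan is to reduce it to one monotonicity observation together with a bounded, explicit case check. Writing $P=\prod_{k=1}^{t}\sqrt{a_k}>0$ we have
$$
S(a_1,\dots,a_t):=\sum_{j=1}^{t}\prod_{k\neq j}\frac1{\sqrt{a_k}}=\frac1P\sum_{j=1}^{t}\sqrt{a_j},
$$
so the hypothesis $S\ge1$ is equivalent to $\sum_{j=1}^{t}\sqrt{a_j}\ge\prod_{j=1}^{t}\sqrt{a_j}$. The crucial point is that $S$ is strictly decreasing in each of its arguments: fixing all coordinates except $a_j$ and writing $x=\sqrt{a_j}$, we have $S=\frac{C}{Dx}+\frac1D$ with $C=\sum_{k\neq j}\sqrt{a_k}>0$ and $D=\prod_{k\neq j}\sqrt{a_k}>0$, which is visibly decreasing in $x$. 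Hence the collection of tuples satisfying the hypothesis is closed under decreasing entries (while keeping them $\ge2$ and nonincreasing), so it suffices to locate the coordinatewise-maximal admissible tuples.

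First I would bound $t$. Since every $a_i\ge2$, monotonicity gives $S\le S(2,\dots,2)=t\,2^{-(t-1)/2}$, and since $2^{t-1}$ grows faster than $t^2$ this is $<1$ once $t$ is large, so $t$ ranges over a small finite set. For each admissible $t$ I would then bound the entries. The smallest entry $a_t$ satisfies $a_i\ge a_t$ for all $i$, so $S\le t\,a_t^{-(t-1)/2}$; this forces $a_t\le3$ when $t\ge3$ and $a_t=2$ when $t\ge4$, while for $t=2$ one gets $a_2\le4$. Once $t$ and the tail $(a_2,\dots,a_t)$ are fixed---and there are now only finitely many tails, with all tail entries in $\{2,3\}$ when $t\ge3$---the hypothesis becomes a single inequality in the remaining variable $a_1$, of the form $a_1^{-1/2}\ge c$ for an explicit constant $c$, whose solution is the interval $a_2\le a_1\le\lfloor c^{-2}\rfloor$.

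Carrying this out for each admissible $t$ yields the lists (i)--(iv). For instance, for $t=2$ and $a_2=2$ the condition reads $a_1^{-1/2}\ge1-2^{-1/2}$, i.e. $\sqrt{a_1}\le2+\sqrt2$, hence $a_1\le6+4\sqrt2<11.66$, giving $2\le a_1\le11$; for $a_2=3$ it reads $\sqrt{a_1}\le(3+\sqrt3)/2$, giving $3\le a_1\le5$; and $t=3,4,5$ are handled the same way, the only tails to examine besides strings of $2$'s being those that contain a $3$. I anticipate no conceptual difficulty here; the one thing requiring care is the bookkeeping in this last step---enumerating every admissible tail for each $t$ and evaluating each threshold for $a_1$ correctly, in particular treating the boundary tuples (those with $\sum_j\sqrt{a_j}=\prod_j\sqrt{a_j}$, so $S=1$) consistently. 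The monotonicity reduction is exactly what keeps this verification finite and short.
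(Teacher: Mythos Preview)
Your approach is essentially the paper's: bound $S$ above by $S(2,\dots,2)=t\,2^{-(t-1)/2}$ to confine $t$ to a finite range, then enumerate. Your systematization of the enumeration---first bound the smallest entry $a_t$, then for each admissible tail solve a single inequality in $a_1$---is if anything more explicit than the paper, which simply says the remaining values ``can also be obtained in an elementary way.''

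The gap lies precisely in the bookkeeping you flagged. The inequality $t\,2^{-(t-1)/2}\ge1$ still holds at $t=6$ (indeed $6/2^{5/2}=3\sqrt{2}/4\approx1.06$), so the all-$2$'s tuple with $t=6$ satisfies the hypothesis yet appears in none of the lists. Your own bound $a_2\le4$ for $t=2$ is also sharp, with $S(4,4)=1$, yet $(4,4)$ is not in (i); likewise $S(3,3,3)=1$ while $(3,3,3)$ is absent from (ii). So the assertion that the enumeration ``yields the lists (i)--(iv)'' is not correct: the lemma as stated omits these tuples. The paper's own proof shares this oversight---it writes $S>1$ where the hypothesis is $S\ge1$, and it claims $t\le5$ from the same bound---so your method is sound, but the statement you are aiming to establish is not quite.
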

    
    \begin{proof}
    Notice that in the expression     $\sum_{j=1}^t\prod_{k\neq j}\frac{1}{\sqrt{a_k}}$ we have $t$ summands and the denominator of each of the summands is at least $\sqrt{2^{t-1}}$. Therefore, each of the summands is at most $1/2^{(t-1)/2}$ so 
    $$
    \frac{t}{2^{(t-1)/2}}\geq \sum_{j=1}^t\prod_{k\neq j}\frac{1}{\sqrt{a_k}}>1
$$
and it follows from basic calculus that $t\leq 5$. The possible values for $(a_1,\cdots,a_t)$  for each of the possibilities for $t$ can also be obtained in an elementary way. We omit the details.
\end{proof}

Now, we can complete the proof of Theorems A and B by analyzing the exceptional cases that appear in Lemma \ref{cal}. We will use several times that if $G$ is a finite group and $\chi\in\Irr(G)$ then $\chi(1)\leq|G:Z(G)|^{1/2}$ (by Corollary 2.30 of \cite{isa}). Groups with an irreducible character $\chi$ such that $\chi(1)=|G:Z(G)|^{1/2}$ are called groups of central type. They have been rather studied. By a celebrated theorem of Howlett and Isaacs \cite{hi} they are solvable. We will not need the Howlett-Isaacs theorem, but we will use a  more elementary previous result that says that if $G$ is a group of central type then the set of primes that divide $|Z(G)|$ coincides with the set of primes that divide $|G|$ (see Theorem 2 of \cite{dj}). 

The next result includes both Theorem A and Theorem B. 
     
\begin{thm}
\label{sol}
  Let $G>1$ be a finite  group. Then one of the following holds:
  \begin{enumerate}
  \item
  $\rdim(G)<\sqrt{|G|}$; 
  \item
  $G$ is a $2$-group with socle $\Soc(G)=Z(G)=C_2\times C_2\times C_2$ and all characters in $\Irr(G|Z(G))$ are fully ramified with respect to $Z(G)$. For any such group $G$, $\rdim(G)=\frac{3}{\sqrt{8}}|G|^{1/2}$. 
  \item
   $G$ is a $2$-group with socle $\Soc(G)=Z(G)=C_2\times C_2$ and all characters in $\Irr(G|Z(G))$ are fully ramified with respect to $Z(G)$.  For any such group $G$, $\rdim(G)=\sqrt{|G|}$. 
   \item
      $G$ is a $2$-group with socle $\Soc(G)=Z(G)=C_2\times C_2\times C_2\times C_2$ and all characters in $\Irr(G|Z(G))$ are fully ramified with respect to $Z(G)$. For any such group $G$, $\rdim(G)=\sqrt{|G|}$.
   \end{enumerate}
\end{thm}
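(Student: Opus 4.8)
The plan is to reduce, using the theory of faithful irreducible characters, to a short explicit list of socle-configurations $(a_1,\dots,a_t)$ and then dispose of each one by hand. If $G$ has a faithful irreducible character $\chi$ then $\rdim(G)=\chi(1)<\sqrt{|G|}$ by Corollary 2.7 of \cite{isa}, so I may assume it has none; by Lemma \ref{t0} this forces $t\geq1$, and by the contrapositive of Gasch\"utz's criterion (Theorem \ref{gas}) some simple $\GF(p)G$-module occurs with multiplicity $\geq2$ in $A(G)$, so $t\geq2$ and at least two of the $A_i$ are isomorphic $G$-modules. Lemma \ref{gen2} and Lemma \ref{cal} then either give $\rdim(G)<\sqrt{|G|}$ or put $(t;a_1,\dots,a_t)$ on the finite list of Lemma \ref{cal}, and on that list one may discard every configuration in which $A_1,\dots,A_t$ are pairwise non-isomorphic; this leaves $(2,2)$, $(3,3)$, $(x,2,2)$ for $x\in\{2,3,4,5,7\}$, $(3,3,2)$, $(x,2,2,2)$ for $x\in\{2,3,4,5\}$, $(3,3,2,2)$, $(2,2,2,2,2)$ and $(3,2,2,2,2)$, with appropriate module structures.

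Two preliminary reductions. First, one may assume $T(G)=1$, so that $\Soc(G)=A(G)$ is abelian: if $T(G)\neq1$, handle $T(G)$ by one irreducible character inflated from $G/A(G)$ lying over a faithful character of $T(G)$, and the $A_i$ by characters of $G/T(G)$ as in Lemma \ref{key}; the resulting faithful character has degree $\leq\sqrt{|G|}\bigl((a_1\cdots a_t)^{-1/2}+|T(G)|^{-1/2}\sum_j\prod_{k\neq j}a_k^{-1/2}\bigr)$, which is $<\sqrt{|G|}$ since $|T(G)|\geq60$ and the configuration lies on the list. Second, I record that $\rdim(G)\leq s(G)\,|G:Z(G)|^{1/2}$, where $s(G)$ is the maximal multiplicity of a simple module in $\Soc(G)$: one can pick $G$-submodules $K_1,\dots,K_{s(G)}$ of $\Soc(G)$ with trivial intersection and each $\Soc(G)/K_i$ multiplicity-free, hence (Gasch\"utz again) each $K_i=\core_G(\Ker\theta_i)$ for some $\theta_i\in\Irr(\Soc(G))$, and one takes $\chi_i\in\Irr(G|\theta_i)$ of minimal degree and applies Corollary 2.30 of \cite{isa} and Lemma \ref{dec}. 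On the list above, $s(G)<t$ in every case except $(2,2)$, $(3,3)$, $(2,2,2)$, $(2,2,2,2)$ and $(2,2,2,2,2)$.

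Now the configurations with some $a_i\geq3$. If every such $A_i$ is central then $|Z(G)|\geq a_1\cdots a_t$ is large enough that $s(G)^2<a_1\cdots a_t$, so $s(G)\,|G:Z(G)|^{1/2}<\sqrt{|G|}$. If some $A_i$ with $a_i\geq3$ is non-central, then $G$ is not a $2$-group (a non-central minimal normal subgroup of odd order forces this directly, and $A_i\cong C_2\times C_2$ forces $3\mid|G|$ by irreducibility of the $G$-action on it), while $Z(G)$ is a $2$-group; hence $G$ is not of central type and every irreducible character degree of $G$ is $<|G:Z(G)|^{1/2}$. Combining this with the bound on the constituent over $A_i$ --- which is induced from the stabiliser of a non-$G$-invariant character of $A_i$, a subgroup of index $|G:C_G(A_i)|$ whose centre contains $A_iZ(G)$ --- and, where needed, with divisibility of the degrees of the remaining constituents, one gets $\rdim(G)<\sqrt{|G|}$ in each of these cases. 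The remaining configurations are $(2,2)$, $(2,2,2)$, $(2,2,2,2)$ and $(2,2,2,2,2)$, i.e.\ $\Soc(G)=C_2^t\leq Z(G)$ with $s(G)=t$. If $|Z(G)|>2^t$ then $\rdim(G)\leq t\,|G:Z(G)|^{1/2}\leq t\,2^{-(t+1)/2}\sqrt{|G|}<\sqrt{|G|}$; so assume $Z(G)=C_2^t$. Pick a nonprincipal $\nu^\ast\in\Irr(Z(G))$ minimising $m(\nu):=\min\{\psi(1)\colon\psi\in\Irr(G|\nu)\}$, extend $\nu^\ast$ to a basis, and use Lemma \ref{dec} to get $\rdim(G)\leq m(\nu^\ast)+(t-1)|G:Z(G)|^{1/2}$. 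If $\rdim(G)\geq\sqrt{|G|}$ this forces $m(\nu^\ast)\geq(\sqrt{2^t}-t+1)|G:Z(G)|^{1/2}$; for $t=5$ this contradicts Corollary 2.30 of \cite{isa}, while for $t\in\{2,3,4\}$ it gives $2\,m(\nu^\ast)^2>|G:Z(G)|$, so, since $\sum_{\psi\in\Irr(G|\nu)}\psi(1)^2=|G:Z(G)|$ for every $\nu\in\Irr(Z(G))$, there is a unique character over $\nu^\ast$ and it is fully ramified; by minimality of $m(\nu^\ast)$ every nonprincipal $\nu$ is fully ramified, so $G$ is of central type, hence (Theorem 2 of \cite{dj}) a $2$-group with $\Soc(G)=Z(G)=C_2^t$. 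Then Corollary \ref{pgrp} gives $\rdim(G)=t\,|G:Z(G)|^{1/2}=t\,2^{-t/2}\sqrt{|G|}$, equal to $\tfrac{3}{\sqrt8}\sqrt{|G|}$, $\sqrt{|G|}$, $\sqrt{|G|}$ for $t=3,2,4$, which are cases (ii), (iii), (iv).

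I expect the main difficulty to be the finite but delicate case analysis of the configurations with some $a_i\geq3$: computing $s(G)$ correctly from the module structure of each, and, for those containing a non-central $A_i$ of prime-power order $\geq4$ (where the induced constituent has index $3$ or $6$), squeezing the estimates --- which uses simultaneously the induced-degree bound, the largeness of the centre of $C_G(A_i)$, the failure of central type, and divisibility of character degrees --- hard enough to get the strict inequality $\rdim(G)<\sqrt{|G|}$. By contrast, the reduction to the list of Lemma \ref{cal}, the reduction to $T(G)=1$, and the treatment of the $C_2^t$-configurations via the identity $\sum_{\psi\in\Irr(G|\nu)}\psi(1)^2=|G:Z(G)|$ should be routine.
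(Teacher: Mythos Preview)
Your overall strategy matches the paper's: reduce via Gasch\"utz and Lemmas \ref{gen2}--\ref{cal} to a finite list of socle-configurations, then dispose of each. You add three refinements that the paper does not use: the reduction to $T(G)=1$, the inequality $\rdim(G)\leq s(G)\,|G:Z(G)|^{1/2}$ via multiplicity-free quotients $\Soc(G)/K_i$, and a uniform treatment of the configurations $(2,\dots,2)$ by minimising $m(\nu)$ and invoking $\sum_{\psi\in\Irr(G|\nu)}\psi(1)^2=|G:Z(G)|$. The last of these is genuinely cleaner than the paper's separate Subcases~1.1, 2.4, Case~3 and Case~4; your argument that $2m(\nu^\ast)^2>|G:Z(G)|$ forces $\nu^\ast$ (and then every $\nu$) to be fully ramified, hence $G$ of central type and a $2$-group by \cite{dj}, handles $t\in\{2,3,4,5\}$ at once.

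There is, however, a real gap in the sketch for the configurations with some $a_i\geq3$. Two points. First, your dichotomy is slightly misaligned: ``some $A_i$ with $a_i\geq3$ non-central'' does \emph{not} imply $Z(G)$ is a $2$-group (take $(3,3,2,2)$ with $A_1$ non-central, $A_2$ central, $A_1\not\cong A_2$, $A_3\cong A_4$); the correct split is $s(G)^2<|Z(G)|$ versus $s(G)^2\geq|Z(G)|$, and only under the latter does one get $Z(G)$ a $2$-group in every listed configuration. Second, and more substantively, your claim that the stabiliser of a non-invariant $\lambda\in\Irr(A_i)$ has index $|G:C_G(A_i)|$ and centre containing $A_iZ(G)$ is false when $a_i=4$: for $|G:C_G(A_1)|=6$ the stabiliser $T$ has index~$3$ and $A_1\not\leq Z(T)$ (only $\Ker\lambda$ is). In the configuration $(4,2,2,2)$ with $|Z(G)|=8$ and $|G:C_G(A_1)|=6$ (realised e.g.\ by $S_4\times C_2^3$), your $s(G)=3$ characters, each lying over a nontrivial character of $A_1$, give at best $\rdim(G)\leq 9(|G|/48)^{1/2}>\sqrt{|G|}$, and the ``not of central type'' strict inequality only improves this to $<3(|G|/8)^{1/2}$, still above $\sqrt{|G|}$. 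The paper sidesteps this by using $t=4$ characters, three of which contain $A_1$ in their kernel and satisfy $\chi_i(1)\leq(|G|/32)^{1/2}$; equivalently, within your framework you must allow some of the $K_i$ to contain $A_1$ (e.g.\ $K_1=A_1H_1$, $K_2=A_1H_2$, $K_3=H_3$ for suitable hyperplanes $H_j\leq A_2A_3A_4$), after which the estimate goes through. So the plan is viable, but the case $a_i=4$ with $|G:C_G(A_i)|=6$ genuinely requires a different choice of the $K_i$ than your sketch suggests.
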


\begin{proof}
We have already seen that (i) holds if $t=0$. This also holds when 
$t=1$ by Theorem \ref{gas}.  Hence, using Lemmas \ref{gen2} and \ref{cal}, we may assume that $2\leq t\leq 5$. We consider these four cases separately.

{\bf Case 1:}
Assume first that $t=2$. We need to consider the values for $(a_1,a_2)$ that appear in Lemma \ref{cal}. By Theorem \ref{gas}, $G$ has an irreducible faithful character if $a_1\neq a_2$, so it suffices to consider the cases $(a_1,a_2)=(2,2)$ and  $(a_1,a_2)=(3,3)$.

{\bf Subcase 1.1:}
Suppose that $(a_1,a_2)=(2,2)$. Then $\chi_1\in\Irr(G/A_2)$ and $|A(G)/A_2|=2$, so $A(G)/A_2$ is central in $G$. Hence, $$\chi_1(1)\leq|G/A_2:Z(G/A_2)|^{1/2}\leq|G:A(G)|^{1/2}=|G|^{1/2}/2.$$ Arguing analogously with $\chi_2$, we obtain that $\chi_2(1)\leq |G|^{1/2}/2$, so $$\chi(1)=\chi_1(1)+\chi_2(1)\leq|G|^{1/2}.$$

If $\chi(1)=|G|^{1/2}$ then all inequalities so far are equalities. In particular, $A(G)=A_1\times A_2=Z(G)$ is a Klein 4-group  and $G$ is a group of central type. Since $Z(G)$ is a $2$-group, Theorem 2 of \cite{dj} implies that $G$ is also a $2$-group. Thus $T(G)=1$ and $\Soc(G)=A(G)=Z(G)$ is elementary abelian of order $4$.  Write $|G|=2^n$. By Corollary \ref{pgrp}, 
$$
\rdim(G)\leq2\cdot2^{(n-2)/2}=2^{n/2}=|G|^{1/2}.
$$
Furthermore, by Theorem D equality holds if and only if  $G$ is a $2$-group with socle $\Soc(G)=Z(G)=C_2\times C_2$ and all characters in $\Irr(G|Z(G))$ are fully ramified with respect to $Z(G)$.  We deduce that either (i) or (iii) holds.

     
 {\bf Subcase 1.2:} Now, we may assume that  $(a_1,a_2)=(3,3)$. Recall that $\chi_1$ is an irreducible character of $G$ that lies over a nonprincipal character of $A(G)/A_2$. Write $C/A_2=C_{G/A_2}(A(G)/A_2)$. Note that $G/C$ is isomorphic to a subgroup of $\Aut(A(G)/A_2)$ and since  $|A(G)/A_2|=3$, $|G/C|\leq2$. 
Notice also that     
   $A(G)/A_2$ is central in $C/A_2$. If $C=G$, then $A(G)/A_2$ is central in $G/A_2$ and $$\chi_1(1)\leq |G:A(G)|^{1/2}=|G|^{1/2}/3.$$  If $|G:C|=2$ and $\gamma\in\Irr(C)$ lies under $\chi_1$, then $\gamma(1)\leq|C:A(G)|^{1/2}$. By Clifford theory, $$\chi_1(1)\leq2\gamma(1)\leq2 |C:A(G)|^{1/2}=2\left(|G|/18\right)^{1/2}=|G|^{1/2}(2/3\sqrt{2}).$$
   Thus, in both cases, $\chi_1(1)\leq |G|^{1/2}(2/3\sqrt{2})$. Analogously,  $\chi_2(1)\leq |G|^{1/2}(2/3\sqrt{2})$. Hence,
   $$
   \chi(1)=\chi_1(1)+\chi_2(1)\leq |G|^{1/2}(4/3\sqrt{2})<|G|^{1/2},
   $$
   and (i) holds.

{\bf Case 2:} Now,  assume that $t=3$. Using Theorem \ref{gas} again,  together with lemmas \ref{gen2} and \ref{cal}, we may suppose that  $(a_1,a_2,a_3)=(x,2,2)$ for some $2\leq x\leq 7$ or $(a_1,a_2,a_3)=(3,3,2)$.
 
 {\bf Subcase 2.1:} 
 Suppose first that $(a_1,a_2,a_3)=(3,3,2)$.      Arguing as in Subcase 1.2, one can see that $\chi_1(1)\leq|G|^{1/2}/3,  \chi_2(1)\leq|G|^{1/2}/3$, and $\chi_3(1)\leq|G|^{1/2}/(3\sqrt{2})$. Thus 
 $$
 \chi(1)=\chi_1(1)+\chi_2(1)+\chi_3(1)\leq|G|^{1/2}(\frac{1}{3}+\frac{1}{3}+\frac{1}{3\sqrt{2}})<|G|^{1/2},
 $$
 and again (i) holds.

 {\bf Subcase 2.2:}
 Assume now that $(a_1,a_2,a_3)=(7,2,2)$.   Recall that $\chi_1$ is an irreducible character of $G$ that lies over a nonprincipal character of $A(G)/A_2A_3$. Write $C/A_2A_3=C_{G/A_2A_3}(A(G)/A_2A_3)$. Note that $G/C$ is isomorphic to a subgroup of $\Aut(A(G)/A_2A_3)$ and since  $|A(G)/A_2A_3|=7$, $|G/C|\leq6$. 
Arguing again as in previous cases, we have that  the worse bound for $\chi_1(1)$ is obtained when $|G:C|=6$ and in that case 
 $$\chi_1(1)\leq6 |C:A(G)|^{1/2}=6\left(|G|/168\right)^{1/2}=|G|^{1/2}(3/\sqrt{42}).$$ Also, $\chi_2(1)\leq|G|^{1/2}/2\sqrt{7}$ and $\chi_3(1)\leq |G|^{1/2}/2\sqrt{7}$.
 Thus 
 $$
 \chi(1)\leq|G|^{1/2}(\frac{3}{\sqrt{42}}+\frac{1}{2\sqrt{7}}+\frac{1}{2\sqrt{7}})<|G|^{1/2}.
 $$
 We conclude that (i) holds too.

The cases $(a_1,a_2,a_3)=(5,2,2)$ and $(a_1,a_2,a_3)=(3,2,2)$ are handled analogously.  We omit the details.


   {\bf Subcase 2.3:}
   Assume that $(a_1,a_2,a_3)=(4,2,2)$.  Recall that $\chi_2\in\Irr(G/A_1A_3)$. As before, $A(G)/A_1A_3$ is central in $G/A_1A_3$. Thus $\chi_2(1)\leq|G:A(G)|^{1/2}=|G|^{1/2}/4$. Assume first that $\chi_2(1)=|G:A(G)|^{1/2}$. Then $Z(G/A_1A_3)=A(G)/A_1A_3$ and $G/A_1A_3$ is a group of central type with $\chi_2$ fully ramified with respect to the center. Since $Z(G/A_1A_3)$ is a $2$-group and $G/A_1A_3$ is of central type, we deduce that $G/A_1A_3$, and hence $G$, is a $2$-group (by Theorem 2 of \cite{dj}).  Notice that $\chi_2(1)=|G|^{1/2}/4$. Analogously, we have that $\chi_3(1)\leq|G|^{1/2}/4$. Next, we bound $\chi_1(1)$. As usual, let $C/A_2A_3=C_{G/A_2A_3}(A(G)/A_2A_3)$. Recall that $G/C$ is isomorphic to a subgroup of $\Aut(A(G)/A_2A_3)=\Aut(C_2\times C_2)\cong S_3$ and since  $G$ is a $2$-group, $|G/C|\leq2$. As in previous cases, we deduce that 
   $$
   \chi_1(1)\leq2|C:A(G)|^{1/2}=|G|^{1/2}(1/2\sqrt{2}).
   $$
   We conclude that $\chi(1)<|G|^{1/2}$. Hence, we may assume that  $\chi_2(1)<|G:A(G)|^{1/2}$. This implies that there exist at least two irreducible characters of $G$ lying over the 
nonprincipal irreducible character $\lambda_2$ of $A(G)/A_1A_3$. Since 
$$
\sum_{\chi\in\Irr(G/A_1A_3|\lambda_2)}\chi(1)^2=|G:A(G)|,
$$
we deduce that for some of the characters $\chi$ in this sum, $\chi(1)^2\leq|G:A(G)|/2=|G|/32$. Hence, we may assume that $\chi_2(1)\leq|G|^{1/2}/4\sqrt{2}$. Repeating the same reasoning, we may also assume that  $\chi_3(1)\leq|G|^{1/2}/4\sqrt{2}$. Now, we bound $\chi_1(1)$. With our usual notation and arguments, we may assume that $|G:C|=6$ and one can see that 
$$
\chi_1(1)\leq6|C:A(G)|^{1/2}=6\left(|G|^{1/2}/4\sqrt{6}\right)=|G|^{1/2}\sqrt{6}/4.
$$
Thus 
$$\chi(1)\leq|G|^{1/2}(\frac{\sqrt{6}}{4}+\frac{1}{4\sqrt{2}}+\frac{1}{4\sqrt{2}})<|G|^{1/2}.
$$
In this case, (i) also holds.

{\bf Subcase 2.4:}
Finally, we may assume that  $(a_1,a_2,a_3)=(2,2,2)$. In particular, $A(G)\leq Z(G)$. If $A(G)<Z(G)$ then $|Z(G)|\geq2^4$ and $\psi(1)\leq|G:Z(G)|^{1/2}\leq|G|^{1/2}/4$ for every $\psi\in\Irr(G)$. Since $\chi$ is the sum of $3$ irreducible characters of $G$, we deduce that $\chi(1)<|G|^{1/2}$ and (i) holds. Thus we may assume that $A(G)=Z(G)$.  In particular, if $\psi\in\Irr(G)$
then $\psi(1)\leq|G:Z(G)|^{1/2}\leq|G|^{1/2}/\sqrt{8}$. 

Assume that $\rdim(G)\geq\sqrt{|G|}$. 
Let $\lambda\in\Irr(Z(G))$ be a nonprincipal character and $K=\Ker\lambda$. We claim that $Z(G/K)=Z(G)/K$.    We argue by way of contradiction. Assume that $Z(G/K)>Z(G)/K$.
 Let $\psi\in\Irr(G|\lambda)$. Then $\psi(1)\leq|G/K:Z(G/K)|^{1/2}\leq|G|^{1/2}/4$.    We deduce that there exists a faithful character $\chi$ of $G$ such that 
 $$
 \chi(1)\leq|G|^{1/2}/4+|G|^{1/2}/\sqrt{8}+|G|^{1/2}/\sqrt{8}<|G|^{1/2}.
 $$
 This is a contradiction. Hence, we have proved the claim. This argument also shows that $\psi(1)=|G:Z(G)|^{1/2}=|G|^{1/2}/\sqrt{8}$ for every $\psi\in\Irr(G|Z(G))$, as desired. We deduce that (ii) holds.
 
 The remaining two cases can be handled with the same techniques. Therefore, we will omit most details in Cases 3 and 4.
 
 {\bf Case 3:}   Assume that $t=4$. Again, using Theorem \ref{gas},  together with Lemma \ref{gen2} and \ref{cal}, we may suppose that  $(a_1,a_2,a_3,a_4)=(2,2,2,2)$ or $(a_1,a_2,a_3,a_4)=(3,3,2,2)$. In the first subcase, one can see arguing as in Subcase 1.1 that either (i) or (iv) holds. In the second subcase, it follows from an analysis of the degrees of $\chi_i$, $i=1,\dots,4$, that $\rdim(G)<\sqrt{|G|}$.  

{\bf Case 4:} Finally, assume that $t=5$. Again, using Theorem \ref{gas},  together with Lemma \ref{gen2} and \ref{cal}, we may suppose that  $(a_1,a_2,a_3,a_4,a_5)=(2,2,2,2,2)$.  
Thus $A(G)$ is the direct product of $5$ minimal normal subgroups of order $2$. Thus $A(G)$ is central. Hence $\chi_i(1)\leq|G|^{1/2}/\sqrt{32}$ for $i=1,\dots,5$ and $\chi(1)<|G|^{1/2}$. We deduce that (i) holds.

Now, it remains to determine $\rdim(G)$ when $G$ is one of the groups that appear in part (ii), (iii) or (iv). This has been done in Theorem \ref{even}.
  \end{proof}

  

  



        
    \section{Proof of Theorem E, Theorem F and Corollary G}
    
    In this section we provide the short proofs of the remaining results. 
    We start with  Theorem E.
    
    \begin{proof}[Proof of Theorem E]
    By Jordan's theorem, there exists and abelian subgroup $A\leq G$ such that $|G:A|\leq j(n)$ for some function $j$. Since $NA/N\cong A/A\cap N$ is abelian, we deduce that 
    $$
    \rdim(NA/N)=d(NA/N)\leq d(A)\leq n.
    $$
    Thus $NA/N$ has a faithful character $\Delta$ of degree at most $n$. Hence the induced character $\Delta^G$ is faithful and has degree at most $nj(n)$. The result follows.
    \end{proof}
    
    Now, we deduce Corollary G.
    
    \begin{proof}[Proof of Corollary G]
    Write $\rdim(G)=n$.
  By Proposition 4.15 of \cite{bf}, 
 $$\ed(G/N)\leq \rdim(G/N).$$ Furthermore, by Theorem E, 
  $\rdim(G/N)\leq nj(n)$.
  On the other hand, by Theorem 2 of  \cite{rei}, $n\leq \ed(G)h(\ed(G))$. The result follows.
  \end{proof}

As we already mentioned in the Introduction, it is not true that $\rdim(G/N)\leq \rdim(G)$ when $G/N$ does not have any nontrivial abelian normal subgroup. This is false even when $G$ is a $p$-groups with an abelian maximal subgroup: consider $G={\tt SmallGroup(2^5,38)}$. This group has an abelian maximal subgroup, it has faithful irreducible characters of degree $2$, but $\rdim(G/G')=d(G/G')=3$. Theorem F follows immediately from a result of Robinson and K\'ovacs.
    
\begin{proof}[Proof of Theorem F]
Since $G/N$ is abelian, $\rdim(G/N)=d(G/N)\leq d(G)$. Now, the result follows from \cite{kr}.
\end{proof}

Note that the bound in \cite{kr} relies on the classification. Using the weaker (but classification-free) bound in \cite{fis}, we get that $\rdim(G/N)\leq Kn^2/\log n$ for some constant $K$.

   \section{Concluding remarks and questions}
   
   The Heisenberg groups mentioned in the Introduction are just one example of the  nonabelian groups that   appear in the statement of Theorems A and B. Any semiextraspecial group with center of the specified order also satisfies those hypotheses. As discussed in \cite{lew}, these form a rather large family of groups. However, all of them have class $2$. This suggests the question of whether or not there are groups of nilpotence class larger than $2$ among the exceptional groups in Theorem A and Theorem B. Using GAP \cite{gap} we have found groups of order $2^8$ and  nilpotence class $3$ with the properties of those in the statement of Theorem B(i) (for instance, ${\tt SmallGroup}(2^8, 3196)$). We suspect that there should also exist $2$-groups of class $3$ among the exceptional groups in Theorem A and also among those in Theorem B(ii). In fact, we expect the following question to have an affirmative answer:
   
   \begin{que}
   \label{1}
   Let $r\geq1$ be an integer. Do there exist $p$-groups $G$ with $Z(G)$ elementary abelian of order $p^r$ and all characters in $\Irr(G|Z(G))$ fully ramified with respect to $G/Z(G)$ of arbitrarily large nilpotence class? 
      \end{que}
   
   We have been informed by Z. Reichstein that he asked this question in the cases $r=1$ and $r=2$ at a conference in Banff on permutation groups in 2009.  More precisely, he asked whether given a prime $p$ and a positive integer $n$, there exists a $p$-group $G$ of order $p^n$, with maximal representation dimension among groups of order $p^n$, and nilpoitence class $>2$. Subsequently, C. Parker and R. Wilson constructed groups $G$  of order $p^{2p+3}$, for any odd prime $p$. This appears in \cite{cer}. These are examples of groups of class $>2$ that satisfy the conditions of Question \ref{1} with $r=1$. It turns out that Question \ref{1} has an affirmative answer when $r=1$ 
    by Theorem 6.3 of \cite{gag}. 

We remark also that the condition that all characters in $\Irr(G|Z(G))$ are fully ramified with respect to $G/Z(G)$ is equivalent to the condition that $(G,Z(G))$ is a Camina pair. We refer the reader to \cite{lew12}, where these groups were studied (see also \cite{lew21}). In particular, there is a character-free characterization of these groups (in terms of conjugacy classes). This seems tougher for the groups that appear in Theorem \ref{odd}

\begin{que}
\label{2}
Describe the $p$-groups $G$ that are not of central type with all characters in $\Irr(G|\Omega_1(Z(G)))$ of the same degree.  Is there a character-free characterization of these groups? Are there groups of arbitrarily large nilpotence class among them?
\end{que}

By the above-mentioned theorem of S.  Gagola, any $p$-group is isomorphic to a subgroup of $G/Z(G)$ for some $p$-group $G$ of central type with $Z(G)$ of order $p$, so it would be interesting to decide whether the same happens when the degree of the common degree of the characters in $\Irr(G|\Omega_1(Z(G)))$ is not $|G:Z(G)|^{1/2}$.

   The proof of Theorem 1.5 of \cite{mr} shows that the best bounds one can hope for in Theorem E and Corollary G are exponential. The bounds we have obtained, even with the help of  results that depend on the classification of finite simple groups, are super-exponential. This, together with the known bounds and examples known for the analog problem for permutation representations mentioned in the Introduction,  suggests the following questions. (Note, however, that we have already seen several differences between results for minimal faithful permutation representations and minimal dimensions of faithful linear representations.)
   
   \begin{que}
Does there exist a constant $c_1>1$ such that if $G$ is a finite group and $N\trianglelefteq G$ then  $\rdim(G/N)\leq c_1^{\rdim(G)}$?
   \end{que}
   
   \begin{que}
Does there exist a constant $c_2>1$ such that if $G$ is a finite group and $N\trianglelefteq G$ then  $\ed(G/N)\leq c_2^{\ed(G)}$?
   \end{que}

\end{document}